\def\d{\,\mathrm{d}}
\newcommand{\VaR}{\mathrm{VaR}}
\newcommand{\ES}{\mathrm{ES}}
\newcommand{\LR}{\mathrm{LR}}
\newcommand{\E}{\mathbb{E}}
\newcommand{\R}{\mathbb{R}}
\renewcommand{\P}{\mathbb{P}}
\renewcommand{\geq}{\geqslant}
\renewcommand{\leq}{\leqslant}
\theoremstyle{plain}
\newtheorem{theorem}{Theorem}
\newtheorem{corollary}{Corollary}
\newtheorem{lemma}{Lemma}
\newtheorem{proposition}{Proposition}
\theoremstyle{definition}
\newtheorem{definition}{Definition}
\newtheorem{assumption}{Assumption}
\newtheorem{remark}{Remark}
\newtheorem{problem}{Problem}
\DeclareMathOperator*{\argmin}{arg\,min}
\renewcommand{\cite}{\citet}
\DeclareRobustCommand{\bsquare}{%
	\mathop{\vphantom{\sum}\mathpalette\bigstar@\relax}\slimits@
}
\newcommand{\bigstar@}[2]{%
	\vcenter{%
		\sbox\z@{$#1\sum$}%
		\hbox{\resizebox{.9\dimexpr\ht\z@+\dp\z@}{!}{$\m@th\dsquare$}}%
	}%
}
\newcommand{\dsquare}{\mathop{  \square} \displaylimits}
\title{Dynamic reinsurance design with heterogeneous beliefs\\ under the mean-variance framework}
\author[a]{Junyi Guo}
\author[b]{Xia Han}
\author[c]{Hao Wang\thanks{Corresponding author.

\ \ \text{E-mail addresses: jyguo@nankai.edu.cn (J. Guo); xiahan@nankai.edu.cn (X. Han);}

\ \ \text{hao.wang@mail.nankai.edu.cn (H. Wang)}}}
\affil [a] {School of Mathematical Sciences and LPMC, Nankai University, Tianjin, 300071, China}
\affil [b] {School of Mathematical Sciences, LPMC and AAIS, Nankai University, Tianjin, 300071, China}
\affil [c] {School of Mathematical Sciences, Nankai University, Tianjin, 300071, China}
\date{}
\begin{document}

\maketitle


\begin{abstract}
This paper investigates the dynamic reinsurance design problem under the mean-variance criterion, incorporating heterogeneous beliefs between the insurer and the reinsurer, and introducing an incentive compatibility constraint to address moral hazard. The insurer's surplus process is modeled using the classical Cram\'er-Lundberg risk model, with the option to invest in a risk-free asset. To solve the extended Hamilton-Jacobi-Bellman (HJB) system, we apply the partitioned domain optimization technique, transforming the infinite-dimensional optimization problem into a finite-dimensional one determined by several key parameters. The resulting optimal reinsurance contracts are more complex than the standard proportional and excess-of-loss contracts commonly studied in the mean-variance literature with homogeneous beliefs. By further assuming specific forms of belief heterogeneity, we derive the parametric solutions and obtain a clear optimal equilibrium solution. Finally, we compare our results with models where the insurer and reinsurer share identical beliefs or where the incentive compatibility constraint is relaxed. Numerical examples are provided to illustrate the impacts of belief heterogeneity and the incentive compatibility constraint on optimal reinsurance strategies.
\bigskip

\noindent\textbf{JEL classification.}  C61, G22
\\

\noindent\textbf{Keywords.}  Mean-variance optimization, belief heterogeneity,  incentive compatibility, extended HJB system, time-consistent strategy

\bigskip

\end{abstract}

\section{Introduction}
\subsection{Background}
Over the past decade, optimal reinsurance has become a central focus in actuarial science, owing to its practical significance. Insurers adjust their retention levels, either continuously or in a {single period}, based on market dynamics and internal factors, to manage risk within their portfolios. 
An appropriate optimization criterion is crucial for deriving the optimal reinsurance strategy. The existing literature can be broadly categorized into two main streams: one that focuses on maximizing expected utility (EU), and another that emphasizes risk minimization. Both approaches have stimulated extensive research. This paper is rooted in the EU maximization framework, with a particular focus on the mean-variance criterion.

Since the foundational work of \cite{M52}, the mean-variance criterion has become a cornerstone in mathematical finance, with stochastic control theory playing a central role in dynamic optimal (re)insurance and investment problems within this framework. It is well recognized that dynamic  problems under the mean-variance criterion suffer from time-inconsistency, as the Bellman optimality principle fails due to the loss of the iterated-expectation property. In particular, the optimal control strategy derived from a fixed initial condition may no longer be optimal in subsequent periods.

To address the issue of time-inconsistency, two primary approaches have been proposed in the literature. The first approach utilizes pre-committed strategies, where decision-makers focus exclusively on achieving optimality at the current time, without regard to future optimality. This method has been extensively studied in the context of mean-variance portfolio selection and (re)insurance optimization, with  contributions from \cite{L00}, \cite{ZL00}, \cite{B05}, \cite{BG13}, \cite{SZ14}, \cite{GXL16}, \cite{DJKX21}, among others. While pre-committed strategies offer valuable theoretical insights and can be economically meaningful in specific situations, they do not resolve the time-inconsistency problem. This is because they disregard the dynamic nature of decision-making over time, failing to account for how optimal decisions might evolve as time progresses.

In contrast, the second approach, which has its roots in the seminal work of \cite{S56}, seeks to address time-inconsistency by framing the problem within the context of non-cooperative game theory. This approach treats decision-making as a dynamic game, where a subgame perfect Nash equilibrium is sought at each stage of the process. By modeling the decision-making process as a game against future decision makers, this approach ensures that the resulting equilibrium strategies are time-consistent. The game-theoretic approach to time-inconsistency has  been further developed in various directions. For example, \cite{BM10} introduces a game-theoretic framework and derived an extended Hamilton-Jacobi-Bellman (HJB) equation using a verification theorem for a
fairly general objective functional. Expanding on this, \cite{BMZ14} considers the case where the insurer's risk aversion is inversely proportional to his current wealth, deriving a corresponding time-consistent strategy. Additionally, \cite{ZLG16} analyzes the equilibrium investment-reinsurance strategy for an ambiguity-averse insurer concerned with model uncertainty, while \cite{YLH21} designs a robust reinsurance contract within the context of a Stackelberg differential game, considering insurers and reinsurers with mean-variance preferences. For further discussion on (re)insurance problems under the mean-variance criterion, we refer readers to \cite{LLX16}, \cite{CS19}, \cite{LY21}, \cite{CLLL21}, \cite{YHLY23} and the reference therein.

The studies mentioned above primarily focus on continuous models. In contrast, more recent literature has also explored static models within the mean-variance framework, employing different methodologies to address the criterion. \cite{CZ17} applies the mean-variance criterion to study an optimal reinsurance problem, where the strategy is priced based on the mean and variance of the indemnity function. \cite{BJ22} examines the optimal insurance design from the perspective of the insured, considering the possibility that the insurer may default on his promised indemnity function. \cite{LY21} generalizes the mean-variance framework to a broader context, specifically investigating a one-period, mean-variance Stackelberg game. Furthermore, \cite{LJZ23} studies an optimal insurance design problem under the mean-variance criterion, incorporating the local gain-loss utility of the net payoff from insurance, a concept known as narrow framing. Lastly, \cite{CJZ23} explores an optimal insurance problem within the mean-variance framework, focusing on the scenario where the insured and insurer hold heterogeneous beliefs about the loss distribution.

\subsection{Research Motivation and Problem Statement}
Except the contributions of \cite{CJZ23}, the studies discussed above generally assume that both parties in an insurance contract share identical probabilistic beliefs concerning the loss distribution. However, this assumption has faced substantial criticism within the field of insurance economics. It is well-documented that information asymmetry is pervasive in insurance markets, where the insurer and the insured often have access to different private information about the underlying random loss. Given that both parties make decisions based on their own perceptions of uncertainty, it is therefore natural for them to hold heterogeneous beliefs about the loss distribution. Consequently, a growing body of literature has examined optimal (re)insurance design particular in static settings under belief heterogeneity, employing a variety of criteria. Notable contributions  include those by \cite{B15}, \cite{BG21}, and \cite{GZC24}, who focus on minimizing specific risk measures,  as well as by \cite{BG19}, \cite{G17},  \cite{CZ17} and \cite{CZ20}, who consider the  utility  function.

Building on the insights from the studies discussed above, to the best of our knowledge, the design of mean-variance insurance within dynamic reinsurance models in the context of belief heterogeneity, remains an unexplored area, which in turn motivates us to undertake the present research. In addition, we incorporate an incentive compatibility condition which requires the losses assumed by both insurer and reinsurer to increase with the actual losses to preclude moral hazard. Specifically, we assume that the insurer's surplus process follows the classical Cramér-Lundberg risk model, where the insurer can purchase reinsurance from a reinsurer.\footnote{We adopt the convention of referring to the insurer with the pronoun `he' and the reinsurer with the pronoun `she'.} The reinsurance premium is determined based on the expected value principle. Furthermore, the insurer is permitted to invest his surplus in a risk-free asset. Our objective is to seek the optimal time-consistent strategy for the mean-variance problem within a game theoretic framework under the assumptions that both the insurer and the reinsurer have heterogeneous beliefs about the claims, and that the insurer selects reinsurance strategies from a range of contracts that satisfy the incentive compatibility constraint.

\subsection{Literature Review}

The incentive compatibility constraint has long been a central concern in contract theory, particularly within static reinsurance frameworks; see, e.g., \cite{A15} and the review by \cite{CC20}, along with the references mentioned earlier.  However, in the context of dynamic reinsurance models, only a limited number of studies have directly addressed this issue, each under different objectives. Notably, \cite{TWWZ20} and \cite{JXZ24} focus on minimizing the ruin probability. Both \cite{MWZZ22} and \cite{MWZ24} incorporate belief heterogeneity into their models; the former investigates the maximization of expected exponential utility, while the latter adopts the Lundberg exponent as the objective function. In addition, \cite{XZZ19} and \cite{X23} examine optimal insurance problems in which the insured seeks to maximize rank-dependent utility preferences.   

In studies related to belief heterogeneity, the likelihood ratio (LR) has been widely used to model the differing beliefs between the insurer and the reinsurer. The majority of related studies have addressed static problems. Moreover, LR is often assumed to be smooth or monotonic. (see, e.g.,  \cite{G13}, \cite{MWZZ22} and \cite{GJR22}). Specifically, a decreasing LR indicates that the reinsurer is more optimistic about the size of the underlying loss compared to the insurer, while an increasing LR  suggests that the insurer holds a more optimistic view than the reinsurer. However, such a framework does not capture the full spectrum of belief heterogeneity between the insurer and the reinsurer.   To address this limitation, some works partially relax the assumptions of monotonicity and continuity of the LR (see, e.g,  \cite{CZ20} and  \cite{MWZ24}).

\subsection{Main Contributions}
This paper aims to investigate the dynamic reinsurance design problem under the mean-variance criterion, explicitly incorporating heterogeneous beliefs and an incentive compatibility constraint, thereby introducing significant additional challenges. In existing studies of dynamic mean-variance framework, either the reinsurance strategy is directly assumed to be in the form of quota share or excess-of-loss reinsurance, or it is shown that these types of contracts are optimal under certain premium principles. In such cases, incentive compatibility is naturally satisfied. However, when belief heterogeneity is introduced, the situation becomes significantly more complex, and incentive compatibility no longer holds by default. This issue is carefully examined in our work.  We employ the partitioned domain optimization technique {which is  well-established in addressing} the static problem (see \cite{CYW13}, \cite{BJ22}, \cite{GJR22}, and \cite{CJZ23}), and extend this methodology to solve the associated extended HJB system for our  dynamic mean-variance problem. This approach yields a parametric form of the solution, thereby enabling the transformation of an infinite-dimensional optimization problem into a finite-dimensional one.
 
In our paper, we adopt the definition of the LR function from \cite{EG79}, further extending it to a more general class of functions, which is a significant departure from the existing literature. This relaxation enables us to capture a broader spectrum of belief heterogeneity and accommodate the possibility of a point mass at zero in the insurable loss distribution — a feature commonly observed in insurance practice, as highlighted by \cite{S68}. Additionally, our framework encompasses two widely used risk measures, Value at Risk (VaR) and Expected Shortfall (ES), which can serve as potential premiums.

In contrast to the existing  dynamic literature mentioned above, we further explore  the  relationship between strategy and time. The relaxation of smoothness in the LR  function and distribution function of claims introduces  additional complexity to this relationship. Nonetheless, we show that, under mild conditions, the strategy remains continuous with respect to time. Additionally, for specific forms of belief heterogeneity, we provide a parametric solution and solve for the optimal parameters, leading to a clear equilibrium outcome. In conclusion, our work significantly extends the existing literature on mean-variance frameworks.


\subsection{Organization of the Paper}
The remainder of the paper is organized as follows. Section \ref{sec:2} formulates the primary problem under investigation. In Section \ref{sec:3}, we first define the equilibrium strategy, then establish its existence and uniqueness, followed by the derivation of its explicit form. Section \ref{sec:4} examines three specific forms of belief heterogeneity, where the premium is calculated using a distortion risk measure. Section \ref{sec:5} presents numerical examples and illustrates the impacts of belief heterogeneity and the incentive compatibility constraint on the model. 
Finally, Section \ref{sec:6} concludes the paper. The proofs of the main results are provided in the appendix.
\section{Formulation of problem}\label{sec:2}
Let $(\Omega,\mathcal F,\mathbf F=\{\mathcal F_t\}_{t\in[0,T]},\mathbb P)$  be a filtered probability space satisfying the usual conditions of
completeness and right continuity, and   $T>0$ be a finite time horizon. We assume that an insurer's surplus process is modeled by the Cramér-Lundberg risk model
\begin{align*}
	\widetilde X(t) = u + ct - \sum\limits_{i=1}^{N(t)}Y_i\ ,
\end{align*}  
where $\widetilde X(0)=u$ is the initial surplus, $N(t)$ is a Poisson process which counts the number of claims occurring in the time horizon $[0,t]$ with intensity $\beta>0$, $\{Y_i\}_{i=1}^{\infty}$ is a sequence of independent and identically distributed positive random variables   which denote each claim size and is independent of $N(t)$, and $c>0$ is the premium rate.    Let $Y$ be the generic random variable which has the same distribution as $Y_i$.  We  assume that $Y$ has finite first-order and second-order moments, i.e. $\mathbb E[Y]<\infty$ and {$\mathbb E[Y^2]<\infty$}; without it, the mean-variance
criterion cannot be applied.  Without loss of generality, we assume $\beta = 1$ and $c>\mathbb E [Y]$, which implies that the insurer's premium income exceeds the expected claims payout per unit of time. { This assumption is standard in classical ruin theory and ensures that  the surplus process $\{ \tilde X(t) \}_{t \geq 0}$ does not fall below zero with probability one.}

In this paper, we consider the insurer who can purchase a reinsurance contract to reduce his risk exposure, that is, the insurer can transfer the claims to a reinsurer under a reinsurance arrangement via a continuously payable reinsurance premium. {Let $I(t,y):[0,T]\times \mathbb R_+\to \mathbb R_+$ denote the indemnity function, which satisfies $I(t, y) \leqslant y$ for all $(t, y)$ and represents the amount ceded to the reinsurer when a claim size $y$ occurs at time $t$. 
The  retained loss for  the insurer is then given by $R(t,y):=y-I(t,y)$.} In addition to reinsurance, the insurer also invests his surplus into a risk-free asset with a constant interest rate $r>0$. After considering reinsurance and investment, the surplus process of the insurer at time $t$ under the reinsurance strategy $I(t,y)$, denoted by $X(t)$, can be represented as 
\begin{align}\label{dynamic}
	\d X(t)=(c-c(I(t,\cdot))+rX(t-))\d t-\d\sum\limits_{i=1}^{N(t)}(Y_i-I({\tau_i},Y_i)),~~~~t\in[0,T],
\end{align}
where $c(I(t,\cdot))$ represents the reinsurance premium rate based on the insurer's indemnity function  $I(t,y)$, and $\tau_i$ denotes the time at which  the $i$th claim occurs.

To avoid potential ex post moral hazard, where the insurer might be tempted to manipulate losses,  we follow the literature (see, for example, \cite{HMS83} and \cite{CD03})  and impose the incentive compatibility condition  (also known as the no-sabotage condition) on the indemnity function. Specifically, we restrict our consideration to indemnity functions from the class defined below:
\begin{align*}
	\mathcal C := \{f:  \R_+\to\R_+   \mid  f(0)=0, 0\leqslant f(y)-f(x)\leqslant y-x, \forall x\leqslant y\}.
\end{align*}
 For any $f \in \mathcal C$, both $f(x)$ and $x-f(x)$ are increasing in $x$,  and every $f\in\mathcal C$ is 1-Lipschitz continuous.  
Since Lipschitz-continuous functions are absolutely continuous,  they are almost everywhere differentiable. It  follows directly  that  $f$ can be written as the integral of its derivatives, which is essentially bounded by   $1$.  Consequently,  we have \begin{equation}\label{eq:marginal}
\mathcal C=\left\{f:  \R_+\to\R_+  \mid  f(0)=0, f(x)=\int_0^x q(t) \d t, ~0 \leq q \leq 1,~ x\in\R_+\right\},
\end{equation}
where $q$ is called the  \emph{marginal indemnity function} \citep{A15,ZWTA16}.

The class $\mathcal{C}$ is rich enough and includes many commonly used indemnity functions, such as the excess-of-loss function $f(x)=(x-d)_{+}$ for some $d \geq 0$ and the quota-share function $f(x)=q x$ for some $q \in[0,1]$.  It is easy to check that for a fixed $t\in[0,T]$,  we have  $I(t,\cdot)\in\mathcal C\iff R(t,\cdot)\in\mathcal C$, and  thus there is no essential difference between considering $I$ and  $R$ in our model.  

Next, we define the admissible reinsurance strategies as follows.
\begin{definition}\label{def:1}
	A reinsurance strategy is a sequence of indemnity {functions} $\bm{I} = \{I(t,\cdot)\}_{t\in[0,T]}$ and is admissible if it is $\mathbf F$-predictable  and satisfies $I(t,\cdot)\in\mathcal C$ for all $t\in[0,T]$. We denote by $\mathcal I$ the set of all admissible strategy processes of the insurer.
\end{definition}

In our model, we explore a setting in which the insurer and reinsurer have heterogeneous beliefs, meaning their subjective assessments of the claim {$Y$}  differ. However, both parties are fully aware of each other's beliefs.  This can be mathematically characterized by  two distinct probability measures, $\mathbb P$ and $\mathbb Q$. Specifically, let $(\Omega_Y,\mathcal F_Y)$ be a measurable space, and let  $\mathbb P_1$ and $\mathbb Q_1$ be two probability measures on this space. The random variables $\{Y_i\}_{i=1,2,...}$ are independent and identically distributed random variables under $\mathbb P_1$ and $\mathbb Q_1$, respectively. Further, let $(\Omega_N,\mathcal F_N,\mathbb P_N)$ be another probability space, with   $N(t)$ denoting a Poisson process defined on this space. Then we can define the product measurable space $(\Omega_Y\times\Omega_N,\mathcal F_Y\times\mathcal F_N)$ as $(\Omega,\mathcal F)$, with two probability measures $\mathbb P = \mathbb P_1\times\mathbb P_N$ and $\mathbb Q = \mathbb Q_1\times\mathbb P_N$.  Let $\mathbb P$ denote the insurer's subjective probability, and $\mathbb Q$  the reinsurer's subjective probability. We assume  that {$Y$ also possesses finite first-order and second-order moments} under $\mathbb Q$. The cumulative distribution functions of claim $Y$ under $\mathbb P$ and $\mathbb Q$ are denoted by $F^{\mathbb P}$ and $F^{\mathbb Q}$, respectively.  The survival function of $Y$ under these two probabilities are given by $S^{\mathbb P}$ and $S^{\mathbb Q}$.

Suppose the reinsurer prices indemnity functions using the expected value premium principle based on her own subjective probability $\mathbb{Q}$. This method is widely adopted in the insurance literature due to its simplicity and economic relevance.  First, we use Poisson random measure $N(\d s,\d y)$ with intensity measure $\mu(\mathrm dy)=\d F^{\mathbb Q}(y)$ to represent the aggregate claims of reinsurer, and we have
\begin{align*}
	\sum\limits_{i=1}^{N(t)}I(\tau_i,Y_i)=\int_0^t\int_{\mathbb R_+}I(s,y)N(\d s,\d y),
\end{align*}
and 
\begin{align*}
	\mathbb E^{\mathbb Q}\left(\sum\limits_{i=1}^{N(t)}I(\tau_i,Y_i)\right)=\int_0^t\int_{\mathbb R_+}I(s,y)\mu(\mathrm d y)\d s.
\end{align*}
Then, the premium is given by:
\begin{align}\label{premium}
	c(I(t,\cdot))=(1+\theta)\int_{\mathbb R_+}{I(t,y)}\d F^{\mathbb Q}(y)=(1+\theta)\mathbb E^{\mathbb Q}[I(t,Y)].
\end{align}
where $\mathbb E^{\mathbb Q}$ denotes the expectation under probability $\mathbb Q$, and $\theta\geqslant 0$ is the safe loading factor.

Suppose that  the insurer uses mean-variance criterion under his belief $\mathbb P$ to determine reinsurance strategy, that is, the objective function of the  the insurer is given by 
\begin{align}\label{eq:J}
	J(t,x;\bm{I})=\mathbb E^{\mathbb P}_{t,x}[X(T)]-\dfrac{\gamma}{2}\mathrm{Var}^{\mathbb P}_{t,x}[X(T)],~~~ \bm{I}\in \mathcal I,
\end{align}
where $\mathbb E^{\mathbb P}_{t,x}$ and $\mathrm{Var}^{\mathbb P}_{t,x}$ are conditional expectation and conditional variance under condition $X(t)=x$ and $\gamma>0$ is the risk aversion coefficient of the insurer.

From now on, we omit the superscript $\mathbb{P}$ for convenience when an operator is in the sense of probability $\mathbb P$.
 The optimization  problem  of the insurer to be solved is  formulated  as follows. 
\begin{problem}\label{pro:1}The target of the insurer is to find {an} optimal admissible strategy {$\bm{I}^*=\{I^*(t,\cdot)\}_{t\in[0,T]}$ for initial point $(t,x)$} such that
$$J(t,x;\bm{I}^*)=\max_{\bm{I}\in \mathcal  I}J(t,x;\bm{I}).$$
\end{problem}
Particularly, when $\P = \mathbb Q$,
Problem \ref{pro:1} is reduced to an optimal reinsurance problem with
homogeneous beliefs.

\section{Optimal reinsurance  with mean-variance preference}\label{sec:3}
In this section, we first define the equilibrium strategy and demonstrate its existence and uniqueness. We then derive the explicit form of the equilibrium strategy. Finally, we analyze the solution in two special cases of the model.
\subsection{Equilibrium strategy and its existence}
As mentioned in the Introduction, the dynamic mean-variance criterion has the well-known issue of time inconsistency in that
{an} optimal strategy today may not be optimal tomorrow. In this paper, we follow the
mainstream time-consistent approach by treating the decision-making process as a non-cooperative
game against all strategies implemented by future players; see a series of seminal papers on this
topic {by, e.g.,}  \cite{BM14} and \cite{BKM17}, and more recently {by, e.g.,} \cite{DJKX21}, \cite{YHLY23}, and \cite{YWY24}.
\begin{definition}
	 For any given initial point $(t,x)\in[0,T]\times \R$, { $h>0$,} a fixed strategy $\bm{I}$ and an admissible strategy  $\bm{I}^*\in\mathcal I$, we define a perturbed strategy $\bm{I}^{h}$   as
	\begin{align*}
		{I}^{h}(s,y)=\left\{
		\begin{aligned}
			&I(s,y),&t\leqslant s< t+h,\\
			&I^*(s,y),&t+h\leqslant s\leqslant T,
		\end{aligned}
		\right.
	\end{align*}
	{for all $y\geqslant 0$}. If 
	\begin{align*}
	\liminf\limits_{h\rightarrow 0^+}\dfrac{J(t,x;{\bm{I}^*)-J(t,x;\bm{I}}^{h})}{h}\geqslant 0
	\end{align*}
	holds for any admissible strategy $\bm{I} \in \mathcal I$, then  $\bm{I}^*$ is an equilibrium strategy.   The resulting  equilibrium value function is given  by
	\begin{align*}
		V(t,x) = J(t,x;\bm{I}^*).
	\end{align*}
\end{definition}

{For any $I\in\mathcal C$ and $\varphi\in C^{1,1}([0,T]\times \mathbb R)$,  the variational operator $\mathcal{L}^I: C^{1,1}([0,T]\times\mathbb{R}) \to C([0,T]\times\mathbb{R})$ is defined through its image $\mathcal L^I \varphi$:
\begin{align*}
	\mathcal L^I\varphi(t,x) = \dfrac{\partial\varphi}{\partial t}(t,x)+(c-c(I)+rx)\dfrac{\partial\varphi}{\partial x}(t,x)+\mathbb E[\varphi(t,x-(Y-I(Y)))-\varphi(t,x)].
\end{align*}}
We next present the extended HJB system for the characterization of the value function $V$ and the corresponding equilibrium strategy in Theorem \ref{verification}. The proof of this theorem is standard, and thus we simply omit it. The readers are referred to  Theorem 3.1 of \cite{LLX16} and Theorem 5.2 of \cite{BKM17}   for more details.

\begin{theorem}[Verification theorem]\label{verification}
	Suppose that there are two functions $V(t,x)$, $g(t,x)\in C^{1,1}([0,T]\times \mathbb R)$ satisfying the following conditions:
    \begin{itemize}
    \item[{(i)}] For any $(t,x)\in[0,T]\times\R$, 
	\begin{align}\label{hjb1}
		\sup\limits_{I\in\mathcal C} \{\mathcal L^IV(t,x)-\dfrac{\gamma}{2}\mathcal L^Ig^2(t,x)+\gamma g(t,x)\mathcal L^Ig(t,x)\}=0.
	\end{align}
    
	\item[(ii)] For any $(t,x)\in[0,T]\times\R$, 
	\begin{align}\label{hjb2}
	\mathcal L^{I^*}g(t,x)=0,
	\end{align}
and
	\begin{align}\label{hjb0}
		V(T,x) = g(T,x)=x.
	\end{align}
	{where $I^*(t,y)$ is the optimal $I$ to achieve the supremum in \eqref{hjb1} and we use $I^*$ to simplify the presentation whenever there is no risk of confusion.}
    \item[(iii)] 
	{For any $(t, x) \in[0, T] \times \mathbb{R}$, $ I^*$ is deterministic function of $t$ and independent of $x$, that is, $I^*:t\rightarrow I^*(t,\cdot)\in\mathcal C$, and $\bm{I}^*=\{I^*(t,\cdot)\}_{t\in[0,T]}$ is an admissible strategy.}
\end{itemize}
Then {$\bm{I}^*=\{I^*(t,\cdot)\}_{t\in[0,T]}$} is the equilibrium strategy  and  $V(t,x)=J(t,x;\bm{I}^*)$ is the equilibrium value function to  Problem \ref{pro:1}. Besides, $g(t,x)=\mathbb E_{t,x}[X^{*}(T)]$ where $X^*$ is the surplus process under $\bm{I}^*$. 
\end{theorem}

We will first show the
existence and uniqueness of the solution to the system of equations \eqref{hjb1}--\eqref{hjb0}. By Itô's formula and \eqref{dynamic}, we have
\begin{align*}
	\d(e^{r(T-t)}X(t)) = e^{r(T-t)}(c-c(I(t,\cdot)))\d t-e^{r(T-t)}\d\left(\sum\limits_{i=1}^{N(t)}(Y_i-I(\tau_i,Y_i))\right),
\end{align*}
and thus
\begin{align}\label{XT}
	{X(T)}= e^{r(T-t)}{X(t)}+\int_t^Te^{r(T-s)}(c-c(I(s,\cdot)))\d s-\int_t^Te^{r(T-s)}\d\left(\sum\limits_{i=1}^{N(s)}(Y_i-I(\tau_i,Y_i))\right).
\end{align}
Inspired by \eqref{XT}, we conjecture that
the  solution of the extended HJB system of equations has the following form:
\begin{align}
\label{v}V(t,x) = e^{r(T-t)}x+M(t),~~~ g(t,x) = e^{r(T-t)}x+m(t),	
\end{align}with terminal condition $M(T)=m(T)=0$. 
Substituting \eqref{premium} and \eqref{v} 
 into  \eqref{hjb1},
we get 
\begin{align}\label{hjb3}
	M'(t)+ce^{r(T-t)}-e^{r(T-t)}\inf\limits_{I\in\mathcal C}\{(1+\theta)\mathbb E^{\mathbb Q}[I(Y)]+\mathbb E[Y-I(Y)]+\dfrac{\gamma e^{r(T-t)}}{2}\mathbb E[(Y-I(Y))^2] \}=0.
\end{align}
Denote by \begin{equation}\label{eq:H}H(t,I)= (1+\theta)\mathbb E^{\mathbb Q}[I(Y)]+\mathbb E[Y-I(Y)]+\dfrac{\gamma e^{r(T-t)}}{2}\mathbb E[(Y-I(Y))^2].\end{equation}

\begin{proposition}\label{sol_exist}
	{For any $t \in [0,T]$}, there exists an {$I^*(t,\cdot)\in\mathcal C$} that attains the infimum of  \eqref{eq:H}, and the solution is unique in the sense that $\mathbb P(I_1(Y)=I_2(Y))=1$ if both $I_1$ and $I_2$   attains this infimum.
\end{proposition}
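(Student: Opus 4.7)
The plan is to prove existence by a direct compactness--continuity argument on $\mathcal C$ and uniqueness via the strict convexity of the quadratic term in $H(t,\cdot)$.

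First I would observe that every $I \in \mathcal C$ satisfies $0 \le I(y) \le y$ by \eqref{eq:marginal}, so that the standing moment assumptions $\mathbb E[Y^2] < \infty$ and $\mathbb E^{\mathbb Q}[Y] < \infty$ (both imposed in Section \ref{sec:2}) guarantee
\begin{align*}
0 \le H(t,I) \le H(t,0) = \mathbb E[Y] + \tfrac{\gamma e^{r(T-t)}}{2}\mathbb E[Y^2] < \infty,
\end{align*}
hence $h^\star := \inf_{I \in \mathcal C} H(t,I)$ is finite. I would pick a minimizing sequence $\{I_n\} \subset \mathcal C$. Each $I_n$ is $1$-Lipschitz with $I_n(0)=0$, so the family is uniformly bounded and equicontinuous on every compact subset of $\mathbb R_+$. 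I would invoke Arzel\`a--Ascoli, combined with a diagonal extraction across $[0,K]$ for $K \in \mathbb N$, to produce a subsequence (still denoted $\{I_n\}$) converging uniformly on compacts to some $I^\star : \mathbb R_+ \to \mathbb R_+$. The limit inherits $I^\star(0)=0$ and the $1$-Lipschitz property, so $I^\star \in \mathcal C$. To conclude $H(t,I^\star) = h^\star$, I would use that $I_n(Y) \to I^\star(Y)$ almost surely under both $\mathbb P$ and $\mathbb Q$, then apply the dominated convergence theorem three times with the envelopes $I_n(Y) \le Y$ (which is $\mathbb Q$-integrable) and $(Y - I_n(Y))^2 \le Y^2$ (which is $\mathbb P$-integrable), giving termwise convergence of the three summands of $H(t,I_n)$ to those of $H(t,I^\star)$.

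For uniqueness, suppose $I_1^\star, I_2^\star \in \mathcal C$ both attain $h^\star$. Since $\mathcal C$ is convex, $I_\lambda := \lambda I_1^\star + (1-\lambda) I_2^\star \in \mathcal C$ for any $\lambda \in (0,1)$. The first two terms of $H(t,\cdot)$ are affine in $I$, while the pointwise strict convexity of $x \mapsto (y-x)^2$ yields
\begin{align*}
(Y - I_\lambda(Y))^2 \le \lambda (Y - I_1^\star(Y))^2 + (1-\lambda)(Y - I_2^\star(Y))^2,
\end{align*}
with equality exactly on the event $\{I_1^\star(Y) = I_2^\star(Y)\}$. Taking $\mathbb P$-expectation and combining with the affine parts gives $H(t,I_\lambda) \le h^\star$, which must be equality by minimality of $h^\star$; this forces $\mathbb P(I_1^\star(Y) = I_2^\star(Y)) = 1$. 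The main technical obstacle is not the convexity machinery itself but the interplay between the unbounded domain $\mathbb R_+$ and the subsequential limit: the uniform envelope $I_n(y) \le y$ inherited from the no-sabotage condition, together with the moment assumptions on $Y$ under both $\mathbb P$ and $\mathbb Q$, is precisely the ingredient needed to upgrade pointwise convergence into the integral convergence that lets $H(t,\cdot)$ pass to the limit.
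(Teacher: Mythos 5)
Your proof is correct and follows essentially the same route as the paper's: a minimizing sequence plus Arzel\`a--Ascoli with a diagonal argument to extract a locally uniform limit in $\mathcal C$, followed by dominated convergence to pass $H(t,\cdot)$ to the limit, and convexity of $\mathcal C$ together with the strict convexity of the quadratic term for uniqueness. Your uniqueness step is stated slightly more carefully than the paper's (which phrases it as a contradiction from ``distinct'' solutions and invokes ``Jessen's inequality''): by noting that equality in the pointwise convexity bound holds exactly on $\{I_1^\star(Y)=I_2^\star(Y)\}$, you directly deliver the almost-sure conclusion without needing to disambiguate what ``distinct'' means, which is the cleaner way to reach $\mathbb P(I_1^\star(Y)=I_2^\star(Y))=1$.
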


Also, by substituting   \eqref{v}  
back into   \eqref{hjb2}, 
we obtain
\begin{align}\label{hjb4} m'(t)+ce^{r(T-t)}-e^{r(T-t)}(1+\theta)\mathbb E^{\mathbb Q}[I^*(t,Y)]-e^{r(T-t)}\mathbb E[Y-I^*(t,Y)]=0. \end{align}    Given the existence of {$I^*(t,\cdot)$}, it is straightforward to verify that both  \eqref{hjb3} and \eqref{hjb4} admit  solutions {as they are ordinary differential equations that can be explicitly solved by direct integration upon substituting $I^*$}. Therefore,   the existence of an equilibrium strategy follows from {Proposition \ref{sol_exist}} and Theorem \ref{verification}.

\begin{remark}
For the existence of the {reinsurance strategy $I^*(t,\cdot)$},  in fact, $\mathcal C$ is a compact subset of $C([0,\infty))$ with respect to the   topology of compact convergence, and $H(t,I)$ is continuous on $\mathcal C$. Thus, by the extreme value theorem, $H(t,I)$ attains both its infimum and supremum on $\mathcal C$.
\end{remark}
\subsection{The form of the equilibrium strategy}\label{sec:3.1}
In this section, we investigate the form of the equilibrium strategy as defined in Theorem \ref{verification}.  This leads to the study of the infimum in \eqref{hjb3}, which is expressed as
\begin{align}\label{inf}
	\inf\limits_{I\in\mathcal C}\{H(t, I)\}.
\end{align}
{And \eqref{inf} can be further decomposed as
\begin{align*}
	\inf\limits_{I\in\mathcal C}\{H(t, I)\}=\inf\limits_{a\in[0,\mathbb E(Y)]}\inf\limits_{\substack{I\in\mathcal C,\\ \mathbb E[I(Y)]=a}}H(t,I).
\end{align*}
Since the outer minimization is a single-variable optimization problem, we focus our analysis on the inner problem. Our main result in Theorem 2 below characterizes the form  of its minimizer. Importantly, this characterization is not affected by the subsequent outer minimization, and thus this form can also be viewed as the form of the equilibrium strategy.
The inner minimization is indeed a constrained optimization problem:}
\begin{align*}
	\inf\limits_{I\in\mathcal C}\{H(t,I)\}, ~~
	\text{subject to } \mathbb E[I(Y)]=a, 
\end{align*}
where  $a \in[0, \E[Y]]$.  
With some simplification, the problem becomes
\begin{align}\label{cop}
\begin{split}
	\inf\limits_{I\in\mathcal C}\{(1+\theta)\mathbb E^{\mathbb Q}[I(Y)]+\dfrac{\gamma e^{r(T-t)}}{2}\mathbb E[I(Y)^2]-\gamma e^{r(T-t)}\mathbb E[YI(Y)] \},\\
	\text{subject to } \mathbb E[I(Y)]=a.
\end{split}	
\end{align}
By the Lagrange dual method, problem \eqref{cop} is equivalent to
\begin{align}\label{ldp}
	\inf\limits_{I\in\mathcal C}\{(1+\theta)\mathbb E^{\mathbb Q}[I(Y)]+\dfrac{\gamma e^{r(T-t)}}{2}\mathbb E[I(Y)^2]-\gamma e^{r(T-t)}\mathbb E[YI(Y)]-\lambda\mathbb E[I(Y)] \},
\end{align}
where $\lambda\in\R$ is the Lagrangian multiplier. For convenience, define the functional
\begin{align*}
	{G(t, I;\lambda)}=(1+\theta)\mathbb E^{\mathbb Q}[I(Y)]+\dfrac{\gamma e^{r(T-t)}}{2}\mathbb E[I(Y)^2]-\gamma e^{r(T-t)}\mathbb E[YI(Y)]-\lambda\mathbb E[I(Y)],
\end{align*}
and let $I^*$ be the minimizer of problem \eqref{ldp}. Since $\mathcal C$ is a convex set,  for any $I\in\mathcal C$ and $\varepsilon\in[0,1]$,  we have $I^*+\varepsilon(I-I^*)\in\mathcal C$. Consequently, {$G(t, I^*+\varepsilon(I-I^*);\lambda)$} arrives its minimum value at $\varepsilon=0$. Expanding {$G(t, I^*+\varepsilon(I-I^*);\lambda)$}, we get
\begin{align*}
	{G(t, I^*+\varepsilon(I-I^*);\lambda)}=&\dfrac{\varepsilon^2\gamma e^{r(T-t)}}{2}\mathbb E[((I-I^*)^2]+\varepsilon\gamma e^{r(T-t)}(\mathbb E[I^*(I-I^*)]-\mathbb E[Y(I-I^*)])\\
	&+\varepsilon(1+\theta)\mathbb E^{\mathbb Q}[I-I^*]-\varepsilon\lambda\mathbb E[I-I^*]+constant.
\end{align*}
Thus, {$G(t, I^*+\varepsilon(I-I^*);\lambda)$} is a convex function with respect to $\varepsilon$. The fact that the minimum occurs at $\varepsilon=0$ implies that
\begin{align*}
	\left.\dfrac{\partial {G(t, I^*+\varepsilon(I-I^*);\lambda)}}{\partial\varepsilon}\right|_{\varepsilon=0}\geqslant0.
\end{align*}
This leads to the following condition
\begin{align}\label{dG}
\begin{split}
	&\gamma e^{r(T-t)}\mathbb E[I^*\cdot I^*]	-\gamma e^{r(T-t)}\mathbb E[YI^*]+(1+\theta)\mathbb E^{\mathbb Q}[I^*]-\lambda\mathbb E[I^*]\\
	\leqslant &\gamma e^{r(T-t)}\mathbb E[I^*\cdot I]	-\gamma e^{r(T-t)}\mathbb E[YI]+(1+\theta)\mathbb E^{\mathbb Q}[I]-\lambda\mathbb E[I].
\end{split}
\end{align}
Obviously, \eqref{dG} is equivalent to
\begin{align}\label{argmin}
	I^*\in \argmin_{I\in\mathcal C}\{\gamma e^{r(T-t)}\mathbb E[I^*\cdot I]	-\gamma e^{r(T-t)}\mathbb E[YI]+(1+\theta)\mathbb E^{\mathbb Q}[I]-\lambda\mathbb E[I]\}.
\end{align}
Finally, since any function $I\in\mathcal C$ is absolutely continuous and  almost everywhere differentiable,  we can establish the lemma below by differentiating with respect to $I$ to  characterize $I^*$.
\begin{lemma}
    \label{L}
	Define $L(s; I^*,\lambda)$ as
	\begin{align*}
		L(s; I^*,\lambda)=\int_s^{\infty}(\gamma e^{r(T-t)}I^*(t,y)-\gamma e^{r(T-t)}y-\lambda)\d F(y)+(1+\theta)S^{\mathbb Q}(s).
	\end{align*}
	Then $I^*$ is the solution of \eqref{ldp} if and only if
	\begin{align}\label{dI}
		\frac{\partial I^*}{\partial y}(t,s)=\chi_{D^-}(s)+\xi(s){\chi_{D^0}(s)},
	\end{align}
	where $\chi$ is the indicator function, and 
	\begin{align*}
		D^-=\{s:L(s; I^*,\lambda)<0\},\ D^0=\{s:L(s;I^*,\lambda)=0\},
	\end{align*}
	and $\xi(s)$ is a function such that $I^*\in\mathcal C$.
\end{lemma}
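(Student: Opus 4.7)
\textbf{Proof proposal for Lemma \ref{L}.}

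The plan is to translate the abstract first-order condition \eqref{argmin} into an explicit pointwise condition on the marginal indemnity $q = (I^*)'$, by rewriting every expectation in \eqref{argmin} as an integral against $q(s)\,\d s$ via Fubini. Using the marginal representation \eqref{eq:marginal}, every $I \in \mathcal C$ takes the form $I(y) = \int_0^y q(t)\,\d t = \int_0^\infty q(s)\,\id_{\{s<y\}}\,\d s$ with $0 \le q \le 1$, so the first-order condition \eqref{argmin} reads as the minimization, over such $q$, of a linear functional in $q$. Once written in that form, the minimizer is obtained pointwise by the bang-bang principle.

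First I would write $c_1 := \gamma e^{r(T-t)}$ and apply Fubini to each term appearing inside the argmin of \eqref{argmin}. This gives
\begin{align*}
\E[I^*(Y)\,I(Y)] &= \int_0^\infty q(s) \int_s^\infty I^*(y)\,\d F(y)\,\d s,\\
\E[Y\,I(Y)] &= \int_0^\infty q(s) \int_s^\infty y\,\d F(y)\,\d s,\\
\E^{\Q}[I(Y)] &= \int_0^\infty q(s)\,S^{\Q}(s)\,\d s,\qquad \E[I(Y)] = \int_0^\infty q(s)\,S(s)\,\d s.
\end{align*}
Substituting these into \eqref{argmin} and absorbing the $-\lambda S(s)=-\lambda \int_s^\infty \d F(y)$ term inside the tail integral, the expression to be minimized becomes
\begin{align*}
\int_0^\infty q(s)\left[\int_s^\infty\bigl(c_1 I^*(y) - c_1 y - \lambda\bigr)\,\d F(y) + (1+\theta)\,S^{\Q}(s)\right]\d s
= \int_0^\infty q(s)\,L(s;I^*,\lambda)\,\d s,
\end{align*}
which identifies the bracketed quantity as precisely the function $L(s;I^*,\lambda)$ of the lemma.

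Since the admissible marginals are exactly $\{q : 0 \le q \le 1\}$, the integral $\int_0^\infty q(s)\,L(s;I^*,\lambda)\,\d s$ is minimized pointwise almost everywhere by
\begin{align*}
q(s) = 1 \text{ on } D^- = \{L<0\},\qquad q(s) = 0 \text{ on } D^+ := \{L>0\},
\end{align*}
while on $D^0 = \{L = 0\}$ any $q(s) \in [0,1]$ is optimal; calling this free choice $\xi(s)$ yields \eqref{dI}. Conversely, any $q$ of the form \eqref{dI} achieves this pointwise minimum and hence solves \eqref{argmin}, which by the convexity of $G(t,\cdot)$ established just before the lemma is equivalent to solving \eqref{ldp}. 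The main technical point, and the only place care is needed, is the Fubini swap in the $\E[I^*(Y)I(Y)]$ and $\E[YI(Y)]$ terms: integrability there is automatic from $0\le I,I^* \le y$ and the assumed finiteness of $\E[Y^2]$, so the argument goes through under the standing moment hypothesis on $Y$.
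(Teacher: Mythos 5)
Your proposal is correct and follows essentially the same route as the paper: rewrite the linear functional in \eqref{argmin} via Fubini as $\int_0^\infty L(s;I^*,\lambda)\,I'(s)\,\d s$ and then minimize pointwise over marginal indemnities $0\le I'\le 1$ by the bang-bang argument. The only cosmetic difference is your use of the marginal-indemnity notation $q$ from \eqref{eq:marginal} and the explicit remark on integrability, which the paper leaves implicit.
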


	Although Lemma \ref{L} provides a necessary and sufficient condition for $I^*$ to be the solution of equation \eqref{ldp} and \eqref{argmin}, it cannot be directly used to determine 
 $I^*$, since both sides of equation \eqref{dI} involve $I^*$. Nevertheless, it serves as a useful tool for analyzing the form of
 $I^*$.  In the following analysis, we will derive the form of $I^*$  by examining the sign changes of 
 $L(s;I^*,\lambda)$. 
 
To proceed, we define the likelihood ratio. A Borel measurable function $\mathrm {LR}$ is called a likelihood ratio  of $\mathbb Q$ against $\mathbb P$ if it satisfies  for all  Borel measurable set $E$, 
$$
\mathbb  Q(Y \in E \cap \{\mathrm{LR}<\infty\} )=\int_E  \mathrm{LR}(y)  \d F(y).
$$
It is clear that $\mathrm {LR}(y) \geqslant 0$ almost surely.  
\begin{assumption}\label{ass1}Suppose that  the  likelihood ratio 
LR has finite variation.
\end{assumption}
If $\mathbb Q \ll \mathbb P$,\footnote{Note that $\mathbb Q \ll \mathbb P$ means that the measure $\mathbb{Q}$ is absolutely continuous with respect to the measure $\mathbb{P}$. That is, for any measurable set $A$, if $\mathbb{P}(A) = 0$, then $\mathbb{Q}(A) = 0$.} which is a special case, $\mathrm{LR}$ reduces to the Radon-Nikodym derivative $\d F^{\mathbb Q} / \d F$.  Suppose  further  that the claim {$Y$} has a density under both probabilities,  $\mathbb P$ and $\mathbb Q$   with corresponding probability density functions  $f$ and $f^{\mathbb Q}$. 
Given $f(y)>0$ for $y>0$, we have \begin{equation}\label{eq:RN}
	\text{LR}(y)=\frac{f^{\mathbb Q}(y)}{f (y)}.\end{equation} 
  Additionally, Assumption \ref{ass1} also  allows for the possibility that $Y$  have a probability mass at some point. For example, let  the cumulative distribution function of $Y$  be written as $ \mathbb{P}\{Y \leqslant t\}=p_0+(1-p_0)\int_0^t f (s) d s$, and $\mathbb{Q}\{Y \leqslant t\}=q_0+(1-q_0)\int_0^t f^{\mathbb Q}(s) d s, $ where $p_0, q_0 \in[0,1)$. In this case, 
$$\LR(y)=\frac{q_0 \delta(0)+\left(1-q_0\right) f^{\mathbb Q}(y)}{p_0 \delta(0)+\left(1-p_0\right) f (y)}= \begin{cases}\frac{q_0}{p_0}, & y=0, \\ \frac{1-q_0}{1-p_0} \cdot \frac{f^{\mathbb Q}(y)}{f(y)}, & y>0,\end{cases}$$
 where $\delta$ refers to the Dirac delta measure.

First, we relax the incentive compatibility condition and aim to minimize  \eqref{ldp} over a larger admissible set.
Let 
\begin{align}\label{eq:Beta}
	\mathcal B :=\{f:[0,\infty)\rightarrow[0,\infty)|0\leqslant f(y)\leqslant y,  \forall y\geqslant 0 \}.
\end{align}
{Clearly, we have $\mathcal C\subset \mathcal B$.   In this case,   \eqref{ldp} becomes}
\begin{align}\label{ldpb}
\begin{split}
	&\inf\limits_{I\in\mathcal B}\{(1+\theta)\mathbb E^{\mathbb Q}[I(Y)]+\dfrac{\gamma e^{r(T-t)}}{2}\mathbb E[I(Y)^2]-\gamma e^{r(T-t)}\mathbb E[YI(Y)]-\lambda\mathbb E[I(Y)] \} \\
	=&\inf\limits_{I\in\mathcal B}\left\{\int_0^{\infty}\left(\dfrac{\gamma e^{r(T-t)}}{2}I(y)^2-\gamma e^{r(T-t)}yI(y)-\lambda I(y)+(1+\theta)I(y)\text{LR}(y)\right)\d F(y)\right\}.
\end{split}
\end{align}
This problem admits a pointwise optimal solution. Let  $$h(t, z)=\dfrac{\gamma e^{r(T-t)}}{2}z^2-\gamma e^{r(T-t)}yz-\lambda z+(1+\theta)\text{LR}(y)z,$$ and  we can solve problem \eqref{ldpb} pointwise by minimizing $h(t, z)$ with respect to 
$z$ over  $[0,y]$. Since $h(t, z)$ is a convex function in $z$,   the first-order condition gives 
{\begin{align*}
	\frac{\partial h(t, z)}{\partial z}= \gamma e^{r(T-t)}z-\gamma e^{r(T-t)}y-\lambda+(1+\theta)\text{LR}(y)=0,
\end{align*}}
which simplifies to
\begin{align}\label{phiy}
	 z=y+\dfrac{\lambda-(1+\theta)\text{LR}(y)}{\gamma e^{r(T-t)}}=:\phi_{\lambda}({t},y).
\end{align}
{Thus, the solution to \eqref{ldpb} is given by  \begin{equation}\label{eq:tildeI}\widetilde{I}(t,y)=\min\{y,\max\{0,\phi_{\lambda}({t},y)\}\}.\end{equation}
 Since  $\text{LR}(y)$  has finite variation, it implies that  $\widetilde{I}(t,\cdot)$ also has finite variation. In general, $\widetilde{I}(t,\cdot)$ does not {belong} to $\mathcal C$ unless  $\widetilde{I}(t,\cdot)$ is continuous and $0\leqslant\widetilde{I}'(t,\cdot)\leqslant 1$ holds for all differentiable $y\geqslant 0$, or  equivalently,  $\phi_{\lambda}'(t,\cdot)\in [0,1]$ for all differentiable $y\geqslant 0$. In this case,  $\widetilde{I}(t,y)$ is the solution to \eqref{ldp}.}

{We adopt an optimization approach over a partitioned domain as mentioned in the Introduction.
To proceed, we make the following two assumptions.
\begin{assumption}\label{ass2}
	Suppose that  the  likelihood ratio $\mathrm{LR}(y)$ and $F(y)$ are non-differentiable at only finitely many points.
\end{assumption}
Denote the non-differentiable points in Assumption \ref{ass2} by $y'_1,y'_2,...,y'_n$, where $0=y'_0\leqslant y'_1<y'_2<\cdots<y'_n<\infty$.  Then, the function $\phi_{\lambda}(t,y)$ is differentiable with respect to $y$ on each interval $(y'_{i-1},y'_{i})$ for $i=1,...,n$, as well as on  $(y'_n,\infty)$. Building on this, we impose the following additional assumption.
\begin{assumption}\label{ass3}
	For any $t\in[0,T]$, we assume that each interval $(y'_{i-1},y'_{i})$ for $i=1,...,n$ and $(y'_n,\infty)$ can be partitioned into finite disjoint sub-intervals according to the values of the derivative $\frac{\partial \phi_{\lambda}(t,y)}{\partial y}$.  That is, on each sub-interval, one of the following holds: $$\frac{\partial \phi_{\lambda}(t,y)}{\partial y}>1, \frac{\partial \phi_{\lambda}(t,y)}{\partial y}\in[0,1], ~\text{or}~ \frac{\partial \phi_{\lambda}(t,y)}{\partial y}<0.$$
\end{assumption}

 {In the remainder of this paper, unless otherwise stated, we always assume that Assumptions \ref{ass1}--\ref{ass3} holds.}

 Let $0 = y_0(t) < y_1(t) < \cdots < y_{m_t}(t)$ be the strictly increasing sequence comprising the point 0 together with all sub-interval endpoints arising from Assumptions \ref{ass2} and \ref{ass3}.  For convenience, we simplify $y_i(t)$ to $y_i$ without causing confusion and set $y_{m_t+1} = \infty$. This induces a partition of the interval $[0,\infty)$ into disjoint sub-intervals $S_{i,j_i}$, where $i=1,2,...,m_t+1$ and $j_i\in\{1,2,3\}$, defined according to the values of $\frac{\partial \phi_{\lambda}(t,y)}{\partial y}$ as follows:

\begin{align*}
	[0,\infty)=\bigcup\limits_{i=1}^{m_t+1}S_{i,j_i},
\end{align*}
where $S_{i,j_i}$ is $[y_{i-1},y_i)$ and
\begin{align*}
	j_i=\left\{
	\begin{aligned}
		&1,\text{ if }\tfrac{\partial \phi_{\lambda}(t,y)}{\partial y}>1\text{ on }(y_{i-1},y_i),\\
		&2,\text{ if }\tfrac{\partial \phi_{\lambda}(t,y)}{\partial y}\in[0,1]\text{ on }(y_{i-1},y_i),\\
		&3,\text{ if }\tfrac{\partial \phi_{\lambda}(t,y)}{\partial y}<0\text{ on }(y_{i-1},y_i).
	\end{aligned}
	\right.
\end{align*}}
With this partition in place, we can now apply Lemma \ref{L} to derive the main theorem below.
\begin{theorem}\label{form}
	{Under Assumptions \ref{ass1}--\ref{ass3},} when $F(y)$ is strictly increasing, for problem \eqref{ldp}, the  optimal indemnity function  $I^*$  over $S_{m_t+1,j_{m_t+1}}$ at $t$ is given by 
	\begin{enumerate}[label*=(\roman*)]
		\item If $j_{m_t+1}=1$, then $I^*(t,y)=I^*(t,y_{m_t})+(y-s_{m_t+1})_+$ for some $s_{m_t+1}\in[y_{m_t},\infty)$.
		\item If $j_{m_t+1}=2$, then $I^*(t,y)=\min\{I^*(t,y_{m_t})+y-y_{m_t},\max\{I^*(t,y_{m_t}),\phi_{\lambda}(t,y) \} \}$.
		\item If $j_{m_t+1}=3$, then $I^*(t,y)=I^*(t,y_{m_t})+Z_{(y_{m_t},s_{m_t+1}]}(y)$ for some $s_{m_t+1}\in[y_{m_t},\infty)$.
	\end{enumerate}
	The optimal indemnity function $I^*$ over $S_{i,j_{i}}$ at $t$, where  $i=1,2,...,m_t$, is given by 
	\begin{enumerate}[label*=(\roman*), resume]
		\item If $j_{i}=1$, then $I^*(t,y)=I^*(t,y_{i-1})+Z_{(s_{i},s_{i}+I^*(t,y_i)-I^*(t,y_{i-1})]}(y)$ for some $s_{i}\in[y_{i-1},y_i]$.
		\item If $j_{i}=2$, then $I^*(t,y)=\min\{\max\{\phi_{\lambda}(t,y),I^*(t,y_i)+y-y_i,I^*(t,y_{i-1})\},I^*(t,y_{i-1})+y-y_{i-1},I^*(t,y_i) \}$.
		\item If $j_{i}=3$, then $I^*(t,y)=I^*(t,y_{i-1})+y-y_{i-1}-Z_{(s_{i},s_{i}+y_i-y_{i-1}-I^*(t,y_i)+I^*(t,y_{i-1})]}(y)$ for some $s_{i}\in[y_{i-1},y_i]$.
	\end{enumerate}
	Here, $Z_{(a,b]}(x)=(x-a)_+-(x-b)_+$.
\end{theorem}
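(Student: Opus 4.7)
The plan is to invoke Lemma \ref{L} after first rewriting $L(s; I^*, \lambda)$ so that its dependence on the pointwise minimizer $\phi_\lambda$ is transparent. Using $S^{\mathbb Q}(s)=\int_s^{\infty}\LR(y)\,\d F(y)$ together with the definition $\phi_\lambda(y)=y+(\lambda-(1+\theta)\LR(y))/(\gamma e^{r(T-t)})$, a direct substitution yields
\begin{align*}
L(s; I^*, \lambda) \;=\; \gamma e^{r(T-t)}\int_s^{\infty}\bigl(I^*(y)-\phi_\lambda(y)\bigr)\,\d F(y).
\end{align*}
Thus the sign of $L$ at $s$ reflects, in an $F$-averaged sense over the tail $[s,\infty)$, whether $I^*$ lies above or below $\phi_\lambda$, and by Lemma \ref{L} this sign dictates $(I^*)'$: zero where $L>0$, one where $L<0$, and free in $[0,1]$ where $L=0$.

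I would then localise the analysis to each subinterval $S_{i,j_i}$. Because the partition $\{y_i\}$ includes every non-differentiable point of $\LR$ and $F$ and is chosen so that $\phi_\lambda'$ has a constant sign regime on each interior, $\phi_\lambda$ is smooth and monotone there. Treating the boundary levels $I^*_t(y_{i-1})$ and $I^*_t(y_i)$ as prescribed parameters (to be fixed by the gluing procedure), I would show the stated forms are optimal subject to the admissibility constraints $I^*_t(y_{i-1})\leq I^*(y)\leq I^*_t(y_i)$ and the $1$-Lipschitz property inherited from $\mathcal C$.

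\textbf{Case $j_i=2$.} When $\phi_\lambda'\in[0,1]$, setting $I^*=\phi_\lambda$ makes the integrand of $L$ vanish pointwise and automatically respects the $1$-Lipschitz property. The admissibility requirements then force $I^*$ to be clipped to the feasible corridor carved out by the monotone bounds $\{I^*_t(y_{i-1}), I^*_t(y_i)\}$ and the slope-$1$ bounds $\{I^*_t(y_{i-1})+(y-y_{i-1}),\, I^*_t(y_i)+(y-y_i)\}$ whenever $\phi_\lambda$ exits it, yielding precisely the nested min--max expression in (v) (the expression in (ii) arises as the specialisation obtained by dropping the right-endpoint constraints when the interval is unbounded). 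Wherever $I^*$ is clipped to a boundary, one checks via the simplified form of $L$ that its sign is compatible with $(I^*)'\in\{0,1\}$. \textbf{Cases $j_i\in\{1,3\}$.} When $\phi_\lambda$ is too steep ($j_i=1$) or decreasing ($j_i=3$), no portion of $\phi_\lambda$ is attainable by an element of $\mathcal C$ on the interior, so by Lemma \ref{L} the derivative $(I^*)'$ can take only the values $0$ or $1$ a.e. Monotonicity of $\phi_\lambda$ on the interior then implies that the difference $I^*-\phi_\lambda$ changes sign at most once, so $L$ has at most one sign change, forcing $I^*$ to consist of exactly one constant piece and one slope-$1$ piece. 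For $j_i=1$, $\phi_\lambda$ is increasing faster than $I^*$, so the constant piece comes first and the slope-$1$ piece (of length $I^*_t(y_i)-I^*_t(y_{i-1})$) starts at some $s_{i,0}$; for $j_i=3$ the ordering reverses, giving the slope-$1$ piece of length $(y_i-y_{i-1})-(I^*_t(y_i)-I^*_t(y_{i-1}))$ starting at some $s_{i,1}$, followed by a constant piece. The terminal interval $S_{m+1,j_{m+1}}$ is handled identically; the absence of a right endpoint simply removes the matching condition at $y_{m+1}$ and collapses the formulas to the simpler displayed forms.

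The main obstacle is that Lemma \ref{L} is only a necessary local condition that couples $I^*$ on the entire tail through $L$, so the piecewise analysis must be made consistent across all intervals simultaneously. I would address this by treating the boundary levels $\{I^*_t(y_i)\}$ as free parameters to be optimised downstream (this is precisely the finite-dimensional reduction promised in the introduction), verifying the stated form on each $S_{i,j_i}$ for fixed boundary data, and relying on the continuity and monotonicity built into $\mathcal C$ to glue the pieces into a globally admissible function whose $L$ has the correct sign pattern everywhere.
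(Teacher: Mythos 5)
Your reformulation of $L$ as
\[
L(s; I^*, \lambda) \;=\; \gamma e^{r(T-t)}\int_s^{\infty}\bigl(I^*(y)-\phi_\lambda(y)\bigr)\,\d F(y)
\]
is correct and is the same information as the paper's introduction of $K(s)=\lambda+\gamma e^{r(T-t)}s-\gamma e^{r(T-t)}I^*(s)-(1+\theta)\mathrm{LR}(s)$: since $L'(s)=K(s)f(s)=\gamma e^{r(T-t)}\bigl(\phi_\lambda(s)-I^*(s)\bigr)f(s)$, studying where $I^*-\phi_\lambda$ changes sign is precisely studying where $L'$ (equivalently $K$) changes sign. So your approach and the paper's are essentially the same at the structural level.

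There is, however, a genuine logical gap in your treatment of the cases $j_i\in\{1,3\}$. You correctly observe that when $\phi_\lambda'>1$ (resp.\ $\phi_\lambda'<0$) the function $I^*-\phi_\lambda$ is strictly monotone on the subinterval, hence changes sign at most once. But this only tells you that $L'$ changes sign at most once, i.e.\ that $L$ is unimodal on the subinterval (first decreasing then increasing, or vice versa). A unimodal function can cross the $s$-axis up to \emph{twice}, not once. Your sentence ``so $L$ has at most one sign change, forcing $I^*$ to consist of exactly one constant piece and one slope-$1$ piece'' is therefore incorrect: the correct conclusion (which is what the theorem actually states via the operator $Z_{(\cdot,\cdot]}$) is that $I^*$ can consist of a constant piece, a slope-$1$ piece, and another constant piece -- a pulse. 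You do in fact write down this three-piece form (``the slope-$1$ piece of length $I^*_t(y_i)-I^*_t(y_{i-1})$ starts at some $s_{i,0}$'', which implicitly has a trailing constant piece to reach $y_i$), but it contradicts the ``at most one sign change'' reasoning that precedes it. The paper handles this carefully: it notes $K$ is strictly monotone, so $K$ vanishes at most once and $L$ can cross the axis at most twice, then introduces both $s_{i,0}$ and $s_{i,1}$. To fix your argument, replace ``$L$ has at most one sign change'' with ``$L'$ has at most one sign change, so $L$ is unimodal and crosses zero at most twice'', and then deduce the constant--slope--constant pulse; the endpoint matching conditions then pin down the length of the slope-$1$ piece as $|I^*_t(y_i)-I^*_t(y_{i-1})|$ (case $j_i=1$) or $(y_i-y_{i-1})-\bigl(I^*_t(y_i)-I^*_t(y_{i-1})\bigr)$ (case $j_i=3$). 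Your discussion of $j_i=2$ and the gluing strategy is otherwise in line with the paper, though the paper additionally argues via the no-upcross/no-downcross lemma that $L$ can vanish on at most one subinterval rather than postulating the clipped form directly.
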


{It is important to emphasize that Theorem \ref{form}, in fact, does not provide the values of $I^*(t,\cdot)$ on the entire intervals; rather, it gives a parametric form of $I^*(t,\cdot)$. In particular, the values of $I^*(t,\cdot)$ at the interval endpoints remain unknown and should be treated as parameters. Thus,  Theorem \ref{form} alone is insufficient to fully construct the optimal strategy  $I^*(t,\cdot)$. Instead,  it expresses $I^*(t,\cdot)$  in terms of  parameters $ I^*(t,y_1), I^*(t,y_2), \ldots, I^*(t,y_{m_t})$, the multiplier $\lambda$, and the interval-specific parameters $s_i$. By substituting this parametric form  into \eqref{inf}, the original infinite-dimensional optimization problem of finding the function $I^*(t,\cdot)$ is transformed into a finite-dimensional problem of finding the  optimal values of these parameters. Solving this finite-dimensional problem yields the minimizing parameters and thus fully $I^*(t,\cdot)$.}

    
    We provide an example of ${I^*(t,\cdot)}$ in Figure \ref{fig:example}. In this case, the interval $[0,\infty)$ is divided into three sub-intervals, and  five parameters are required to determine ${I^*(t,\cdot)}$: $s_1$, $s_3$, ${I^*}(t,y_1)$, ${I^*}(t,y_2)$ and $\lambda$.  In general, when there are $n$ sub-intervals,  at most $2n+1$ parameters are needed to fully determine ${I^*(t,\cdot)}$. Moreover, from \eqref{phiy} and parts $(ii)$ and $(v)$ of Theorem \ref{form}, it is easy to know that there exists $\lambda_{\min}$ and $\lambda_{\max}$ such that ${I^*(t,y)}$ remains unchanged when $\lambda\geqslant \lambda_{\max}$ or $\lambda\leqslant \lambda_{\min}$ (which corresponds to the  two diagonal dashed lines  in the middle region). Consequently, we can  further define the domain of $\lambda$ as   $[\lambda_{\min},\lambda_{\max}]$. 
    
    Furthermore, the division of sub-intervals depends on $\phi_{\lambda}(t,y)$, which actually depends on $\text{LR}(y)$. Specifically, we have the following relationships
	\begin{align*}
	\tfrac{\partial \phi_{\lambda}(t,y)}{\partial y}>1&\iff\text{LR}'(y)<0,\\
		0\leqslant \tfrac{\partial \phi_{\lambda}(t,y)}{\partial y} \leqslant 1&\iff 0\leq \text{LR}'(y)\leq\frac{\gamma}{1+\theta}e^{r(T-t)},\\
		\tfrac{\partial \phi_{\lambda}(t,y)}{\partial y}<0&\iff \text{LR}'(y)>\frac{\gamma}{1+\theta} e^{r(T-t)}.
	\end{align*}

\begin{figure}[!htbp]
\centering
\includegraphics[scale=0.6]{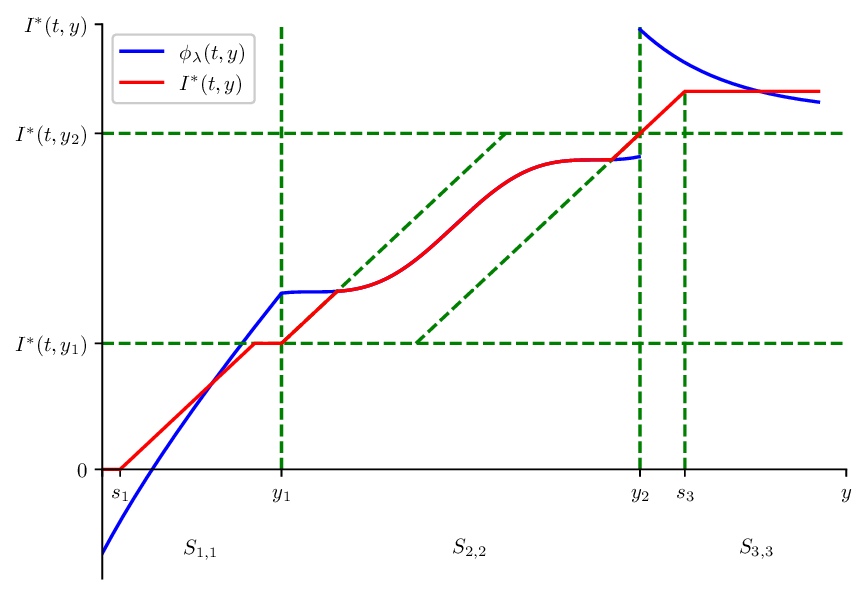}
\caption{An example of   the optimal  indemnity function $I^*$}
\label{fig:example}
\end{figure}

We observe that the condition of $\tfrac{\partial \phi_{\lambda}(t,y)}{\partial y}>1$ is independent  of $t$.  Further,  the relationships outlined above provide critical insights into the risk attitudes of the insurer and reinsurer, which, in turn, influence the optimal structure of the reinsurance contract. Specifically:
\begin{itemize}\item When $\tfrac{\partial \phi_{\lambda}(t,y)}{\partial y} > 1$, i.e, when $\mathrm{LR}(y)$ is decreasing in $y$, it indicates that the insurer is less optimistic about the right tail risk compared to the reinsurer.   This typically results in the insurer opting for a limited excess-of-loss   strategy (locally) for that sub-interval, transferring the risk associated with larger losses to the reinsurer. This is illustrated in Case (iv) of Figure \ref{fig:1}.

\item When $0\leqslant \tfrac{\partial \phi_{\lambda}(t,y)}{\partial y}\leqslant 1$, i.e., $\mathrm{LR}(y)$ is increasing in $y$,   the insurer is more optimistic than the reinsurer about the right tail risk, but not excessively so, as there is an upper bound  of  the derivative determined by the parameters   $\gamma$, $\theta$, $r$ and $t$. In this scenario, the insurer may choose to implement a co-insurance strategy; see  Case (v) in Figure \ref{fig:1}.

\item  When $\tfrac{\partial \phi_{\lambda}(t,y)}{\partial y} < 0$, the likelihood ratio derivative $\mathrm{LR}'(y)$ exceeds a positive certain threshold,   the insurer is significantly more optimistic about the right tail risk than the reinsurer. In this case,  the insurer is willing to transfer the lower tail risk to the reinsurer as the insurer is relatively more pessimistic about smaller losses.  {At the same time, the insurer may opt to purchase limited reinsurance when the claim is not excessively large}; see Case (vi) in Figure \ref{fig:1}.

\end{itemize}
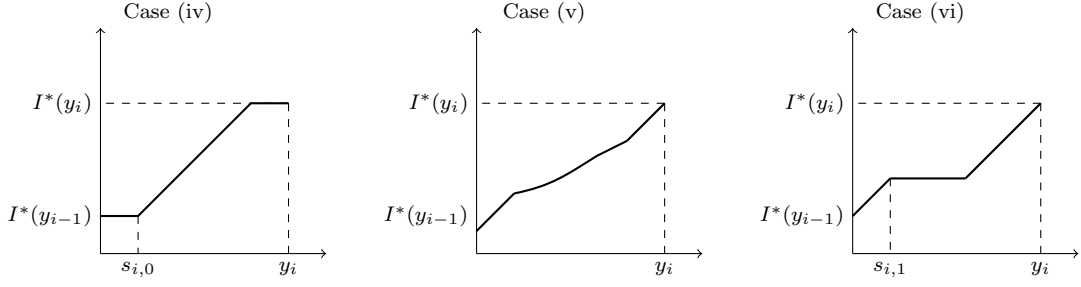
\begin{figure}
    \centering
\begin{tikzpicture}
    \scriptsize
    \begin{scope}[xshift=0cm]
        \draw[->] (0,0) -- (3,0) node[right] {};
        \draw[->] (0,0) -- (0,3) node[above] {};
        \draw[dashed] (2.5,2) -- (0,2) node[left] {$I^*(t,y_i)$};
        \draw (0,0.5) node[left] {$I^*(t,y_{i-1})$};
        \draw[thick] (0,0.5) -- (0.5,0.5) -- (2,2)--(2.5,2);
        \draw[dashed]  (2.5,2)--(2.5,0)node[below] {$y_i$};
        \draw[dashed] (0.5,0.5) -- (0.5,0) node[below] {$s_{i}$};
            \node at (0.9,3.2) {Case (iv)};
    \end{scope}

    \begin{scope}[xshift=5cm]
       \draw[->] (0,0) -- (3,0) node[right] {};
        \draw[->] (0,0) -- (0,3) node[above] {};

       \draw[thick] (0,0.3) -- (0.5,0.8);  
        
\draw[thick] (0.5,0.8) .. controls (1,0.9) and (1.2,1.05) .. (1.6,1.3) .. controls (1.8,1.4) and (2,1.5) .. (2,1.5);

 \draw (0,0.5) node[left]{$I^*(t,y_{i-1})$}; 
 
 \draw[thick] (2,1.5) -- (2.5,2) node[right] {};
       \draw[dashed] (2.5,2) -- (2.5,0) node[below]{$y_i$}; 

       \draw[dashed] (2.5,2) -- (0,2) node[left]{$I^*(t,y_i)$}; 
         \node at (0.9,3.2) {Case (v)};
    \end{scope}

    \begin{scope}[xshift=10cm]
       
       \draw[->] (0,0) -- (3,0) node[right] {};
        \draw[->] (0,0) -- (0,3) node[above] {};
        
       \draw[thick] (0,0.5) -- (0.5,1);
       \draw (0,0.5) node[left] {$I^*(t,y_{i-1})$};
       \draw[thick] (0.5,1) -- (1.5,1);
 \draw[dashed] (0.5,1) -- (0.5,0) node[below]{$s_{i}$}; 
        \draw[thick] (1.5,1) -- (2.5,2);     
       \draw[dashed] (2.5,2) -- (2.5,0) node[below]{$y_i$}; 
       \draw[dashed] (2.5,2) -- (0,2) node[left]{$I^*(t,y_{i})$}; 
        \node at (0.9,3.2) {Case (vi)};
    \end{scope}
\end{tikzpicture}

    \caption{The form of the optimal  indemnity function $I^*(t,\cdot)$}
    \label{fig:1}
\end{figure}
Note that the cases (i) corresponds to (iv), (ii) to (v), and (iii) to (vi), with the distinction that cases (i)-(iii) represent the final partition, and therefore do not involve boundaries.

\begin{remark}\label{rem:2}   
The threshold $\frac{\gamma}{1+\theta} e^{r(T-t)}$ reflects the insurer's level of optimism regarding the underlying risk  in a certain sense.    The smaller this value, the more likely the insurer is to become relatively optimistic. 
 First,  $\gamma$  reflects the insurer's level of risk aversion. A higher value of $\gamma$ indicates a more risk-averse insurer,  making him less willing to accept large risks. This leads to an increase in the threshold, meaning the  insurer becomes less optimistic about the right tail.
 Second, a higher $\theta$ indicates a greater margin  on the premium.  From the insurer's perspective,   a higher $\theta$ also signals a higher cost of reinsurance, which may increase the insurer's optimism as they are less willing to accept higher premiums.
Third,  the risk-free interest rate 
$r$ suggests the returns the insurer can earn on capital. A higher 
$r$ leads to a more cautious stance and raises the threshold, as the insurer prefers to allocate more capital towards investments rather than taking on significant risks. 
 Finally, as the time $t$  approaches $T$,   the remaining time to maturity decreases. This leads to an increase in the insurer's optimism about the right tail risk, as the insurer becomes more confident that larger risks will materialize in the short term, thereby lowering the threshold.
\end{remark}

In the following proposition, we present the case where LR is monotonically decreasing.   By Theorem \ref{form}, the result is straightforward.

\begin{proposition}\label{prop:3} If $\mathrm{LR}(x)$ is decreasing over  $[0, \infty)$, then the optimal indemnity function takes the form {$I^*(t,y)=(y-d_t)_{+}$ for some $d_t > 0$}.
\end{proposition}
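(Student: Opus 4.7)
The plan is to prove Proposition \ref{prop:3} by applying Theorem \ref{form} after identifying the type $j_i$ of every sub-interval in the partition. Differentiating the definition \eqref{phiy} gives $\phi_\lambda'(y) = 1 - (1+\theta)\mathrm{LR}'(y)/(\gamma e^{r(T-t)})$, and the assumption that $\mathrm{LR}$ is decreasing on $[0,\infty)$ forces $\mathrm{LR}'(y) \leqslant 0$ wherever it exists, hence $\phi_\lambda'(y) \geqslant 1$ on the differentiability set. In the trichotomy stated just before Theorem \ref{form}, this places every sub-interval in type $j_i = 1$ (or $j_i = 2$ exactly on segments where $\mathrm{LR}$ is locally constant, where $\phi_\lambda' = 1$ still falls inside $[0,1]$).

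In the generic case in which $\mathrm{LR}$ and $F$ are smooth on $(0, \infty)$, the partition collapses to $m = 0$, $y_0 = 0$, and $S_{1,1} = [0, \infty)$. Case (i) of Theorem \ref{form}, combined with the boundary condition $I^*_t(y_0) = I^*_t(0) = 0$ forced by $I^*_t \in \mathcal{C}$, then directly gives $I^*_t(y) = (y - s_0)_+$ for some $s_0 \geqslant 0$, and one sets $d = s_0$. For a general decreasing $\mathrm{LR}$ with finitely many non-differentiable points, $(y - d)_+$ still fits the structure dictated by Theorem \ref{form}: on each sub-interval $[y_{i-1}, y_i]$ it is identically $0$ (if $d \geqslant y_i$), identically $y - d$ (if $d \leqslant y_{i-1}$), or realizes the flat-slope-$1$-flat pattern of Cases (iv), (v) with the single breakpoint placed at $d$, corresponding to specific choices of the free parameters.

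I would then verify that $(y - d)_+$ attains the infimum via Lemma \ref{L}. For $I^*_t(y) = (y - d)_+$ and $s \geqslant d$, direct integration gives
\[
L(s; I^*_t, \lambda) = -\bigl(\gamma e^{r(T-t)} d + \lambda\bigr) S(s) + (1+\theta) S^{\mathbb Q}(s).
\]
Choosing $\lambda = (1+\theta) S^{\mathbb Q}(d)/S(d) - \gamma e^{r(T-t)} d$ pins $L(d) = 0$, and differentiation yields $L'(s) = (1+\theta) f(s)\bigl(S^{\mathbb Q}(d)/S(d) - \mathrm{LR}(s)\bigr)$ on $(d, \infty)$, with an analogous nonpositive expression on $(0, d)$. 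Since $S^{\mathbb Q}(d)/S(d)$ is the $F$-conditional average of $\mathrm{LR}$ on $[d, \infty)$, it is sandwiched between $\mathrm{LR}(d)$ and $\lim_{y \to \infty} \mathrm{LR}(y)$; combined with the monotonicity of $\mathrm{LR}$ and the boundary values $L(d) = L(\infty) = 0$, this forces $L \leqslant 0$ on $[d, \infty)$ and $L \geqslant 0$ on $[0, d]$. Hence $(I^*_t)'(s) = \mathbf{1}_{s > d}$ matches the characterization \eqref{dI}, so $(y - d)_+$ is optimal by Lemma \ref{L}; the specific $d$ is pinned down by returning to the unconstrained problem \eqref{inf}, which corresponds to taking $\lambda = 1$ and gives the implicit equation $(1+\theta) S^{\mathbb Q}(d)/S(d) - \gamma e^{r(T-t)} d = 1$.

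The main obstacle is this sign analysis of $L$ on both sides of $d$, which rests on interpreting $S^{\mathbb Q}(d)/S(d)$ as a conditional mean of $\mathrm{LR}$ and using the monotonicity of $\mathrm{LR}$ to ensure $L'$ changes sign at most once on $(d, \infty)$. Once these sign conditions are secured, the essential uniqueness of the minimizer from Proposition \ref{sol_exist} confirms that any optimal indemnity function coincides $\mathbb{P}$-almost surely with $(y - d)_+$.
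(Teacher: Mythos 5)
Your first part mirrors the paper's own (very terse) justification, which is simply ``by Theorem \ref{form}.'' Your observation that a decreasing $\mathrm{LR}$ forces $\phi_\lambda'(y)\geq 1$ everywhere, hence every sub-interval is of type $j_i=1$ (or, degenerately, $j_i=2$ with $\phi_\lambda'\equiv 1$ where $\mathrm{LR}$ is locally flat), is correct. However, as you implicitly concede, in the presence of multiple sub-intervals, Theorem \ref{form} only tells you that $I^*$ lives in a parametric family built from Cases (i)/(iv) (and occasionally (v)); it does not by itself tell you that the \emph{optimal} choice of parameters collapses all of these pieces into a single $(y-d)_+$ rather than, say, a layer $Z_{(a,b]}$. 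The paper glosses over exactly this point. So your second part --- the direct verification through Lemma \ref{L} --- is not redundant; it is the argument that actually clinches the proposition in full generality.

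That Lemma-\ref{L} verification is essentially sound, and goes beyond what the paper records. Two details deserve tightening. First, the formula for $L'$ on $(0,d)$ is not literally ``analogous'': there $I^*(s)=0$ and one gets
\[
L'(s) = \bigl((1+\theta)S^{\mathbb Q}(d)/S(d) - \gamma e^{r(T-t)}(d-s) - (1+\theta)\mathrm{LR}(s)\bigr)f(s),
\]
which carries the extra nonpositive term $-\gamma e^{r(T-t)}(d-s)$; the conclusion $L'\leq 0$ still holds because $\mathrm{LR}(s)\geq \mathrm{LR}(d)\geq S^{\mathbb Q}(d)/S(d)$ for $s\leq d$, but the reader should not be told it is the same expression. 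Second, Proposition \ref{prop:3} is a statement about \eqref{ldp} with an \emph{arbitrary} given $\lambda$, whereas your verification fixes $d$ first and then manufactures the matching $\lambda=(1+\theta)S^{\mathbb Q}(d)/S(d)-\gamma e^{r(T-t)}d$. To finish you should observe that $d\mapsto\lambda(d)$ is strictly decreasing (both summands are decreasing in $d$ since $S^{\mathbb Q}(d)/S(d)$ is the $F$-conditional mean of a decreasing $\mathrm{LR}$), so that for any $\lambda$ in the attainable range there is a unique $d$, with the boundary cases $\lambda\geq\lambda(0)=1+\theta$ corresponding to $d=0$. With those two repairs, your argument is a correct and more rigorous version of what the paper asserts; the closing remark about $\lambda=1$ correctly links back to the unconstrained problem and to Proposition \ref{cov-dis}.
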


Theorem \ref{form} provides the form of equilibrium strategy at time $t$. In general, the structure of the equilibrium strategy may change as $t$ changes, since the partition of $[0,\infty)$ depends on $t$. Nonetheless, under some mild assumptions,  we can establish that the equilibrium strategy  $I^*(t,y)$ is  both unique and continuous in $t$.
\begin{theorem}\label{thm3}
	Under Assumptions \ref{ass1}--\ref{ass3}, if the cumulative distribution function $F(y)$ of $Y$ is {strictly} increasing and $\mathrm{LR}(y)$ is piece-wise $C^1$, then the optimal indemnity function  $I^*(t, y)$ is unique and continuous in $t$.
\end{theorem}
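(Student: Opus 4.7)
The plan is to derive uniqueness at each fixed $t$ from Proposition~\ref{sol_exist} combined with the strict monotonicity of $F$, and then deduce continuity in $t$ via an Arzel\`{a}--Ascoli compactness argument on $\mathcal C$, bypassing any direct manipulation of the parametric description in Theorem~\ref{form}.

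\emph{Uniqueness at fixed $t$.} Fix $t \in [0, T]$ and suppose $I_1, I_2 \in \mathcal C$ both attain the infimum of $H(t, \cdot)$. Proposition~\ref{sol_exist} gives $\mathbb P(I_1(Y) = I_2(Y)) = 1$. Strict monotonicity of $F$ implies $\mathbb P(Y \in (a, b)) > 0$ for every open interval $(a, b) \subseteq [0, \infty)$ (since one can pick $c \in (a,b)$ with $F(b-) \geq F(c) > F(a)$), so the set $\{y : I_1(y) = I_2(y)\}$ is dense in $[0, \infty)$; since every element of $\mathcal C$ is $1$-Lipschitz and hence continuous, this forces $I_1 \equiv I_2$ on $[0, \infty)$.

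\emph{Continuity in $t$.} Equip $\mathcal C$ with the topology $\tau$ of uniform convergence on compact subsets of $[0, \infty)$. Every $f \in \mathcal C$ satisfies $0 \leq f(y) \leq y$ and $|f(y) - f(x)| \leq |y - x|$, so the family is uniformly bounded on compacts and equicontinuous, and these properties pass to uniform-on-compacts limits; thus $\mathcal C$ is compact in $\tau$ by Arzel\`{a}--Ascoli. I would next verify that $H$ in \eqref{eq:H} is jointly continuous on $[0, T] \times (\mathcal C, \tau)$: continuity in $t$ follows from that of $e^{r(T-t)}$, while continuity in $I$ follows by dominated convergence with $I(Y) \leq Y$ and $(Y - I(Y))^2 \leq Y^2$ as integrable dominants, using $\mathbb E[Y^2] < \infty$ and $\mathbb E^{\mathbb Q}[Y] < \infty$. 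Given $t_n \to t$, any subsequence of $\{I_{t_n}^*\}$ admits a $\tau$-convergent sub-subsequence $\{I_{t_{n_k}}^*\}$ with limit $\widetilde I \in \mathcal C$; joint continuity together with the optimality inequality $H(t_{n_k}, I_{t_{n_k}}^*) \leq H(t_{n_k}, I) \to H(t, I)$ for every $I \in \mathcal C$ shows $\widetilde I$ minimizes $H(t, \cdot)$, whence the uniqueness step forces $\widetilde I = I_t^*$. By the standard subsequence principle, $I_{t_n}^*$ converges to $I_t^*$ in $\tau$, yielding pointwise continuity of $t \mapsto I_t^*(y)$ for every $y \geq 0$.

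\emph{Expected obstacle.} The delicate point is the dominated-convergence step securing continuity of $I \mapsto H(t, I)$ under $\tau$: since $\tau$-convergence is only uniform on compacts rather than globally, integrability of $Y$ and $Y^2$ is essential as the dominating envelope. The piecewise $C^1$ hypothesis on $\mathrm{LR}$ plays no direct role in the compactness argument itself; it enters via Theorem~\ref{form} to guarantee a finite partition of $[0, \infty)$ and hence a finite parametric representation of $I_t^*$, which is what makes pointwise continuity of $t \mapsto I_t^*(y)$ a concretely checkable statement rather than a pathological one.
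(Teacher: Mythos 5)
Your proposal is correct, and while the uniqueness step coincides with the paper's (a.s. equality from Proposition \ref{sol_exist} plus strict monotonicity of $F$ plus $1$-Lipschitz continuity), the continuity-in-$t$ step takes a genuinely different route. The paper works through the parametric representation of Theorem \ref{form}: it tracks the partition endpoints $y_i(t)$, uses the piecewise $C^1$ hypothesis on $\mathrm{LR}$ to show these endpoints move continuously, encodes the strategy as $I(\boldsymbol{x},t,y)$ with $\boldsymbol{x}$ ranging over a time-dependent parameter domain $D_t$, and then shows the unique minimizing parameter vector $\boldsymbol{x}(t)$ varies continuously by combining Hausdorff convergence $d_H(D_t,D_{t_0})\to 0$ with a subsequence argument. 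You instead argue directly in function space: compactness of $(\mathcal C,\tau)$ via Arzel\`a--Ascoli (already noted in the paper's remark after Proposition \ref{sol_exist}), joint continuity of $H$ by dominated convergence with dominants $Y$ and $Y^2$ (both integrable under the standing moment assumptions, including $\mathbb E^{\mathbb Q}[Y]<\infty$), and the standard compactness-plus-uniqueness subsequence principle. Your approach buys simplicity and generality: it avoids the delicate bookkeeping of time-varying partitions, and as you observe, the piecewise $C^1$ hypothesis is not actually needed for your argument — only strict monotonicity of $F$ (for uniqueness) is used, so you in fact prove a slightly stronger statement, and you obtain convergence $I^*_{t_n}\to I^*_t$ uniformly on compacts rather than merely pointwise. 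What the paper's route buys in exchange is information at the level of the finitely many parameters $(s_{i,0},s_{i,1},\lambda,\dots)$ of Theorem \ref{form}: their continuity in $t$ is what one wants for the numerical implementation in Sections \ref{sec:4}--\ref{sec:5}, and this is not immediate from your function-space argument alone.
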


\subsection{Two simplified models}
In this subsection, we examine two special cases of the model: one where the insurer and reinsurer share the identical belief, and another where the incentive compatibility constraint is relaxed. 
\begin{corollary}\label{coro:1} If $\mathbb P=\mathbb Q$, the optimal  solution to Problem \ref{pro:1} is given by $$ \overline I^*(t,y)=(x- d^*_t)_+,$$ where $d^*_t=\frac{\theta} {\gamma e^{r(T-t)}}.$
\end{corollary}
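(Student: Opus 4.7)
The plan is to specialize the partitioned-domain machinery of Theorem \ref{form} to the case $\mathbb{P}=\mathbb{Q}$ and then reduce the problem to a one-dimensional optimization over the retention level.

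Under $\mathbb{P}=\mathbb{Q}$ the likelihood ratio is identically equal to $1$, so the function $\phi_\lambda$ defined in \eqref{phiy} collapses to the affine map $\phi_\lambda(y)=y+(\lambda-(1+\theta))/(\gamma e^{r(T-t)})$ with $\phi_\lambda'(y)\equiv 1$. The entire half-line $[0,\infty)$ therefore lies in the regime $j=2$, so the partition in the setup of Theorem \ref{form} is trivial ($m=0$, $y_0=0$, $S_{1,2}=[0,\infty)$). Applying case (ii) of Theorem \ref{form} with $I_t^*(0)=0$ (or equivalently invoking Proposition \ref{prop:3}, since a constant $\mathrm{LR}$ is weakly decreasing) forces the equilibrium indemnity function to take the excess-of-loss form
\[
I_t^*(y)=(y-d)_+
\]
for some retention level $d=d(t)\geq 0$.

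To pin down $d(t)$, I would substitute $I_t(y)=(y-d)_+$ into the functional $H(t,I)$ from \eqref{eq:H}. Since $\mathbb{P}=\mathbb{Q}$ and $Y-(Y-d)_+=Y\wedge d$, this reduces the task to the one-dimensional objective
\[
H(t,d)=(1+\theta)\mathbb{E}[(Y-d)_+]+\mathbb{E}[Y\wedge d]+\tfrac{\gamma e^{r(T-t)}}{2}\mathbb{E}[(Y\wedge d)^2].
\]
Using the standard identities $\tfrac{\d}{\d d}\mathbb{E}[(Y-d)_+]=-S(d)$, $\tfrac{\d}{\d d}\mathbb{E}[Y\wedge d]=S(d)$ and $\tfrac{\d}{\d d}\mathbb{E}[(Y\wedge d)^2]=2d\,S(d)$, differentiation gives
\[
\tfrac{\partial H}{\partial d}=S(d)\bigl(-\theta+\gamma e^{r(T-t)}d\bigr).
\]
Provided $d$ lies in the support of $Y$ (so $S(d)>0$), the unique zero is $d^*=\theta/(\gamma e^{r(T-t)})$, and since the bracketed factor changes sign from negative to positive at $d^*$, this point is a minimizer of $H(t,\cdot)$.

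There is essentially no technical obstacle here: once homogeneous beliefs collapse $\phi_\lambda$ to a line of slope one, Theorem \ref{form} automatically dictates the excess-of-loss form, and the optimal retention drops out of a short calculus exercise. The only minor caveat is the degenerate regime in which the formula $d^*=\theta/(\gamma e^{r(T-t)})$ exceeds the essential supremum of $Y$; there $\partial H/\partial d<0$ on the whole support of $Y$ and the insurer optimally cedes nothing, which still matches the stated formula since $(y-d^*)_+=0$ a.s.\ in that case.
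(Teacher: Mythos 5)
Your proof is correct and follows essentially the same route as the paper: both first observe that $\mathrm{LR}\equiv 1$ forces the excess-of-loss form via Proposition \ref{prop:3} (the paper notes in a remark that Corollary \ref{coro:1} is a special case of that proposition), then substitute $(y-d)_+$ into $H(t,\cdot)$ and solve the resulting one-dimensional first-order condition $H'(d)=S(d)\bigl(\gamma e^{r(T-t)}d-\theta\bigr)=0$ to get $d^*=\theta/(\gamma e^{r(T-t)})$. Your added remarks — the reduction of $\phi_\lambda$ to a slope-$1$ affine map making the partition trivial, and the observation that the formula remains consistent when $d^*$ falls outside the support of $Y$ — are minor clarifications rather than a different method.
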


{Corollary  \ref{coro:1} can be viewed as a special case of Proposition \ref{prop:3}  when $\mathrm{LR} = 1$, and we can solve the $d^*_t$ explicitly.   It is worth noting that    the result in Corollary \ref{coro:1} aligns with that in \cite{CS19}, which explores a more general problem under the assumption of homogeneous beliefs. In their work, stochastic Stackelberg differential reinsurance games are investigated, where the reinsurer also determines the optimal reinsurance contract. In the case of homogeneous beliefs, the excess-of-loss strategy is automatically satisfied, and the optimal solution remains unchanged, regardless of the assumption of incentive compatibility.}

In the case of  belief heterogeneity,  if the incentive compatibility constraint is removed, then Problem \ref{pro:1} becomes: 
\begin{problem}\label{prob:2} {The target of the insurer is to find the optimal admissible strategy $\widetilde{\bm{I}}^*=\{\widetilde I^*(s,\cdot)\}_{s\in[t,T]}$ for initial point $(t,x)$} such that
$$ J(t,x;\widetilde{\bm I}^*)=\max_{\bm{I}\in \mathcal I_0} J(t,x; \bm{I}),$$
where $\mathcal I_0$ denotes the collection of all admissible strategies obtained by replacing the constraint set $\mathcal C$ in Definition \ref{def:1} with $\mathcal B$.
 \end{problem}
 
By  the analysis above,  to solve Problem \ref{prob:2}, we  need   to solve  \eqref{ldpb} firstly, and the solution has already  been given by \eqref{eq:tildeI}. We summarize it in the following corollary.
\begin{corollary}\label{coro:2}
For problem  \eqref{ldpb}, the  optimal indemnity function  $\widetilde I^*$  is given by   $$\widetilde{I}^*(t,y)= \min\{y,\max\{0,\phi_{\lambda}(t,y)\}\}.$$  
\end{corollary}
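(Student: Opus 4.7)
}
The plan is to observe that once the incentive compatibility requirement $I\in\mathcal C$ is relaxed to $I\in\mathcal B$, the constraint $0\le I(y)\le y$ is purely pointwise in $y$, and so the Lagrangian functional in \eqref{ldpb} decouples into a one-dimensional minimization at each $y$. Concretely, I would first rewrite the objective in \eqref{ldpb} in its integral form
\[
\int_0^{\infty} h(t, I(y))\,\d F(y),
\]
using the likelihood ratio $\mathrm{LR}$ against $\P$ exactly as done in the paragraph preceding \eqref{ldpb}, where
\[
h(t,z) = \dfrac{\gamma e^{r(T-t)}}{2}z^2-\gamma e^{r(T-t)} y z-\lambda z+(1+\theta)\,\mathrm{LR}(y)\,z.
\]

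Second, I would argue that the pointwise minimizer of $h(t,z)$ over $z\in[0,y]$ yields a measurable function of $y$ (since $\mathrm{LR}$ is Borel measurable and the min/max projection is continuous), and that any such pointwise minimizer gives a member of $\mathcal B$; hence it is feasible for \eqref{ldpb} and attains the minimum among all $I\in\mathcal B$ by the obvious bound $\int_0^\infty h(t,I(y))\d F(y)\ge \int_0^\infty \min_{z\in[0,y]} h(t,z)\d F(y)$.

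Third, I would carry out the pointwise minimization. For each fixed $y\ge 0$, $z\mapsto h(t,z)$ is a strictly convex quadratic with unconstrained minimizer obtained from the first-order condition used to derive \eqref{phiy}, namely
\[
z^* = y+\dfrac{\lambda-(1+\theta)\mathrm{LR}(y)}{\gamma e^{r(T-t)}} = \phi_{\lambda}(y).
\]
Projecting $\phi_{\lambda}(y)$ onto the interval $[0,y]$ (the standard projection of a scalar quadratic's minimizer onto a box constraint) gives exactly
\[
\widetilde{I}^*_t(y) = \min\{y,\max\{0,\phi_{\lambda}(y)\}\},
\]
which coincides with \eqref{eq:tildeI}.

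Fourth, I would briefly justify that nothing essential is hidden here: the integrand is bounded below and $F$-integrable on any $I\in\mathcal B$ because $I(y)\in[0,y]$ and $Y$ has finite second moment under $\P$ and finite first moment under $\Q$, so Fubini/Tonelli applies and the passage from integral to pointwise minimization is rigorous. The only mildly subtle point, which I would flag but not belabor, is the measurability of $\widetilde{I}^*_t$ in $y$; this follows from Assumption \ref{ass1} since $\mathrm{LR}$ has finite variation (hence is Borel measurable) and $\min\{y,\max\{0,\cdot\}\}$ preserves measurability. No genuine obstacle is expected, since the relaxed feasible set $\mathcal B$ destroys the coupling across $y$ that made Theorem \ref{form} delicate.
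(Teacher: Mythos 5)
Your proposal is correct and is essentially the same derivation the paper itself uses: the paper establishes Corollary \ref{coro:2} by rewriting the objective in \eqref{ldpb} as $\int_0^\infty h(t,I(y))\,\d F(y)$, noting that over $\mathcal B$ the constraint is purely pointwise, applying the first-order condition for the strictly convex quadratic $z\mapsto h(t,z)$ to obtain $\phi_\lambda(y)$, and projecting onto $[0,y]$ to get $\widetilde I(y)=\min\{y,\max\{0,\phi_\lambda(y)\}\}$ as in \eqref{eq:tildeI}. The only additions in your write-up are the explicit remarks on measurability and integrability, which the paper leaves implicit but which are indeed handled by Assumption \ref{ass1} and the moment hypotheses exactly as you say.
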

Even in the absence of the incentive compatibility constraint, the context of continuous-time mean-variance models with belief heterogeneity has not been thoroughly explored in the existing literature. In contrast to Theorem \ref{form}, we observe that the optimal indemnity function simplifies significantly, taking a single form over the entire interval $[0, \infty)$. We can further discuss the solution to Problem \ref{prob:2} based on the specific LR. Note that,  unlike the case with homogeneous beliefs in Corollary \ref{coro:1}, the strategy $\widetilde{I}^*(t,y)$ may lead to moral hazard;  see more details in Section \ref{sec:5}.

\section{\texorpdfstring{Optimal {reinsurance} for special forms of belief heterogeneity}{Optimal reinsurance for special forms of belief heterogeneity}}\label{sec:4}
In Section \ref{sec:3}, we provide the explicit form of the constrained problem. That is, for Problem  \ref{pro:1},  we show that for any $ I \in \mathcal{C} $, there always exists an $ I^* \in \mathcal{C} $  as described in   Theorem \ref{form}  such that $H(t,I^*) \leqslant H(t,I)$.   In this section, we  further simplify  the strategy's form and identify the optimal value of its parameters for some special cases. For simplicity, we assume that the distribution function $F$ of $Y$ has density $f$ in this section, although this assumption can be modified to include a discrete jump.

A distortion risk measure $\rho$ for a random variable $X$ is  defined as
\begin{align*}
	\rho(X)=\int_0^{\infty}g(S(y))\d y-\int_{-\infty}^0(1-g(S(y)))\d y,
\end{align*}
where $g$ is a non-decreasing function on $[0,1]$ called the distortion function,   satisfying $g(0)=0$ and $g(1)=1$. Since the claim $Y$ is a non-negative   in our context,  we have 
\begin{align*}
	\rho(Y)=\int_0^{\infty}g(S(y))\d y.
\end{align*}
Let $F^{\mathbb Q}(y)=1-g(S(y))$. It then  follows that \begin{align*}
	(1+\theta)\mathbb E^{\mathbb Q}[Y]=(1+\theta)\int_0^{\infty}S^{\mathbb Q}(y)\d y=(1+\theta)\int_0^{\infty}g(S(y))\d y=(1+\theta)\rho(Y),
\end{align*}
which is exactly Wang's   premium principle (\cite{WYP97}), and can be seen as a  manifestation of belief heterogeneity.
Since $ g $ is the distortion function applied by the reinsurer, if $ g $ is convex over $[0,1]$, the reinsurer is more optimistic about the potential loss. Conversely, if $ g $ is concave, the reinsurer is less optimistic about the potential loss.

The following proposition provides the optimal reinsurance  strategy when 
$g$ is convex.
\begin{proposition}\label{cov-dis}
Assume that the distortion $g$ is convex and differentiable.  Then   the optimal indemnity function for Problem \ref{pro:1} is given by   $I^*(t, y)=(y-d^*_t)_+,$
where
$$d^*_t=\inf\left\{d\geq0:~1+\gamma e^{r(T-t)}d-(1+\theta)\dfrac{g(S(d))}{S(d)}\geq0\right\}.$$
\end{proposition}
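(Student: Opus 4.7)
The plan is to reduce Proposition \ref{cov-dis} to Proposition \ref{prop:3} by identifying the likelihood ratio induced by the distortion premium, and then to solve a one-dimensional optimization in the deductible $d$. Since $S^{\mathbb Q}(y)=g(S(y))$, differentiating gives $f^{\mathbb Q}(y)=g'(S(y))f(y)$, so \eqref{eq:RN} yields $\LR(y)=g'(S(y))$. Because $g$ is convex, $g'$ is non-decreasing, and since $S$ is non-increasing, $\LR(\cdot)$ is non-increasing on $[0,\infty)$. Proposition \ref{prop:3} therefore implies that any optimum $I^{*}_{t}$ must take the stop-loss form $I^{*}_{t}(y)=(y-d)_{+}$ for some $d\geq 0$, collapsing the infinite-dimensional problem to a one-parameter optimization over the deductible.

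Next, I substitute $I(y)=(y-d)_{+}$ into $H(t,I)$ from \eqref{eq:H}. Using $Y-(Y-d)_{+}=\min(Y,d)$ together with the tail-integral identities $\mathbb E^{\mathbb Q}[(Y-d)_{+}]=\int_{d}^{\infty} g(S(y))\d y$, $\mathbb E[\min(Y,d)]=\int_{0}^{d} S(y)\d y$ and $\mathbb E[\min(Y,d)^{2}]=2\int_{0}^{d} y S(y)\d y$, the problem reduces to minimizing the scalar function
$$\widetilde H(t,d)=(1+\theta)\int_{d}^{\infty} g(S(y))\d y+\int_{0}^{d} S(y)\d y+\gamma e^{r(T-t)}\int_{0}^{d} y S(y)\d y$$
over $d\geq 0$. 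Differentiating in $d$ and factoring, one obtains
$$\frac{\partial \widetilde H}{\partial d}(t,d)=S(d)\,\psi(t,d),\qquad \psi(t,d):=1+\gamma e^{r(T-t)}d-(1+\theta)\frac{g(S(d))}{S(d)}.$$

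The decisive step is the monotonicity analysis of $\psi(t,\cdot)$. Convexity of $g$ combined with $g(0)=0$ implies that $g(x)/x$ is non-decreasing on $(0,1]$; composing with the non-increasing $S$ shows $g(S(d))/S(d)$ is non-increasing in $d$, so $\psi(t,d)$ is non-decreasing in $d$, and strictly increasing wherever $S(d)>0$ thanks to the linear term $\gamma e^{r(T-t)}d$. Since $\psi(t,0)=-\theta\leq 0$ and $\psi(t,d)\to\infty$ as $d$ grows within the support of $Y$, the function $\widetilde H(t,\cdot)$ first decreases and then increases, and its unique minimizer is the smallest $d\geq 0$ with $\psi(t,d)\geq 0$, which is precisely the $d^{*}$ of the statement. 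The main obstacle I anticipate is the careful handling of boundary effects---the right endpoint of the support of $Y$ (where $S(d)$ may vanish) and the possibility that $\{d:\psi(t,d)\geq 0\}$ is a half-line rather than containing an interior zero; taking the infimum in the definition of $d^{*}$ absorbs these cases uniformly, and the sign analysis of $\partial_{d}\widetilde H$ confirms that the resulting critical point is a global minimum over $\mathcal C$.
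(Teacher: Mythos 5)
Your proof is correct and follows essentially the same route as the paper: identify $\LR(y)=g'(S(y))$ as non-increasing from convexity of $g$, invoke the decreasing-LR result (the paper cites Theorem \ref{form} directly, you cite its Corollary, Proposition \ref{prop:3}) to reduce to the stop-loss family $(y-d)_+$, substitute to obtain the scalar objective, factor the derivative as $S(d)\psi(t,d)$, and argue $\psi(t,\cdot)$ is non-decreasing because $g$ convex with $g(0)=0$ makes $g(x)/x$ non-decreasing. The paper's proof is slightly terser (it leaves the $g(x)/x$ monotonicity argument implicit and glosses over the boundary/infimum discussion you add), but the decomposition, the pivotal function $\psi$, and the monotonicity argument are identical.
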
 

{\begin{remark} Since $g$ is convex, we have $g(S(d))/S(d)\leq 1$. Compared to the case of  homogeneous beliefs in Corollary \ref{coro:1}, it is clear that $d^*_t\leqslant \overline d^*_t$.  This implies that the insurer always chooses to transfer more risk to the reinsurer under belief heterogeneity when $g$ is convex. This observation is expected, as a convex  $g$ suggests that the reinsurer is more optimistic about the potential loss and thus charges a relatively cheaper premium. 

Additionally, we observe that $d^*_t$ is increasing in $t$.  This is because, as $t$ approaches $T, $ the game is closer to its end, and the insurer becomes more optimistic about the  risk and  chooses to retain more risk, spending less on reinsurance. This behavior aligns with the case of homogeneous beliefs.
\end{remark}}

As we {know}, Value at Risk (VaR) and Expected Shortfall (ES) are two special distortion risk measures,  where the VaR  at level $\alpha\in(0,1)$  is defined as
\begin{align*}
	\VaR_{\alpha}(Y)=\inf\{y\in\R_+: F(y)\geq 1-\alpha\}, 
\end{align*}  and  the ES at  level $\alpha\in(0,1)$ is the functional $\ES_\alpha:L^1 \to \mathbb{R}$ defined by
\begin{align*}
	\mathrm{ES}_{\alpha}(Y)=\dfrac{1}{\alpha}\int_0^{\alpha}\VaR_s(Y)\d s,
\end{align*}
 and  $\ES_{0}(Z)=\text { ess-sup }(Z)=\VaR_{0}(Z)$  which may be infinite.   In particular, we have $g(y)=\chi_{(\alpha,1]}(y)$ for $\VaR_\alpha$ and $g(y)=\dfrac{y}{\alpha}\chi_{[0,\alpha)}(y)\wedge 1$ for $\ES_\alpha$.  In the following,  the explicit solutions are  derived when the  the reinsurer uses VaR  and ES  as the premium principles.

We first focus on the case of VaR.  
Since $g(y)=\chi_{(\alpha,1]}(y)$, we have  $F^{\mathbb Q}(y)=\chi_{\{F(y)\geqslant 1-\alpha\}}(y)$ and $f^{\mathbb Q}(y)=\delta(y-\VaR_{\alpha}(Y))$, where $\delta$ is the Dirac measure. For measure $\mathbb Q$ and Borel measurable set $A$, we have
\begin{align*}
	0=\mathbb Q(Y\in A\cap \{y\neq \VaR_{\alpha}(Y)\})=\int_{A}0\d F(y).
\end{align*}
Thus, we can define LR$(y)$ as follows
\begin{align*}
	\text{LR}(y)=\left\{
	\begin{aligned}
		&0,&\text{ if } y\neq \VaR_{\alpha}(Y),\\
		&\infty,&\text{ if } y= \VaR_{\alpha}(Y).
	\end{aligned}\right.
\end{align*}
\begin{proposition}\label{prop:4}
Assume that $g(y)=\chi_{(\alpha,1]}(y)$, i.e, the premium is calculated by $\VaR$. Then the optimal indemnity function for Problem \ref{pro:1} is given by     \begin{equation}\label{eq:dual1} I^*(t,y)=y\wedge a^*_t+(y-\VaR_\alpha(Y))_+,\end{equation}where $$a^*_t=\inf\left\{a\in[0, \VaR_\alpha(Y)]: ~ \theta+F(a)+\gamma e^{r(T-t)}\left(aS(a)-\alpha\VaR_{\alpha}(Y)-\int_a^{\VaR_{\alpha}(Y)}ydF(y)\right)\geq0\right\}. $$ 
\end{proposition}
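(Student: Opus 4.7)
My plan is to exploit the degenerate form of $F^{\mathbb Q}$ to bypass the variational framework of Theorem \ref{form}, which does not directly apply because the description of LR collapses to a Dirac mass at $v := \VaR_{\alpha}(Y)$. First, I would observe that for $g(y) = \chi_{(\alpha, 1]}(y)$, the distribution $F^{\mathbb Q}(y) = 1 - g(S(y))$ equals $0$ for $y < v$ and $1$ for $y \geq v$, so $\mathbb Q$ is the Dirac mass at $v$. Consequently $\mathbb E^{\mathbb Q}[I(Y)] = I(v)$, and the objective \eqref{eq:H} reduces to
\begin{equation*}
H(t, I) = (1+\theta) I(v) + \mathbb E[Y - I(Y)] + \tfrac{\gamma e^{r(T-t)}}{2} \mathbb E[(Y - I(Y))^2].
\end{equation*}

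Next, I would reduce the infinite-dimensional problem to a scalar one by a pointwise domination argument. The two retention terms are pointwise decreasing in $I(y)$ (since $Y - I(Y) \geq 0$), while the premium depends only on $a := I(v)$. The candidate $I_a(y) := y \wedge a + (y - v)_+$ lies in $\mathcal C$ and satisfies $I_a(v) = a$. For any $I \in \mathcal C$ with $I(v) = a$, monotonicity yields $I(y) \leq a$ on $[0, v]$, which combined with $I(y) \leq y$ gives $I(y) \leq y \wedge a$; 1-Lipschitzness gives $I(y) \leq a + (y - v)$ on $(v, \infty)$. Thus $I \leq I_a$ pointwise, which forces $H(t, I_a) \leq H(t, I)$. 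It therefore suffices to minimize $H(t, I_a)$ over $a \in [0, v]$.

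For the scalar problem, I would compute $H(t, I_a)$ explicitly using $S(v) = \alpha$ and then differentiate. Applying the Leibniz rule and the identity $\int_a^v y \, dF(y) = v F(v) - a F(a) - \int_a^v F(y) \, dy$, one obtains
\begin{equation*}
\frac{dH}{da}(a) = \theta + F(a) + \gamma e^{r(T-t)} \left( a S(a) - \alpha v - \int_a^v y \, dF(y) \right) = \theta + F(a) - \gamma e^{r(T-t)} \int_a^v S(y) \, dy,
\end{equation*}
while $\frac{d^2 H}{da^2}(a) = f(a) + \gamma e^{r(T-t)} S(a) > 0$, so $H(t, I_a)$ is strictly convex in $a$. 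Since $\frac{dH}{da}(v) = \theta + 1 - \alpha > 0$, the minimizer is precisely $a^* = \inf\{a \in [0, v] : \frac{dH}{da}(a) \geq 0\}$, matching the expression stated in Proposition \ref{prop:4}. The main obstacle I anticipate is recognizing that Theorem \ref{form} is inapplicable because of the singular LR and replacing it with the pointwise domination step above; the remainder is calculus.
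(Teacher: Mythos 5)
Your proposal is correct, and it arrives at the same reduction as the paper but by a slightly cleaner route. The paper's own proof invokes Theorem \ref{form} with the Dirac-type likelihood ratio (partitioning $[0,\infty)$ at $\VaR_\alpha(Y)$, writing a two-parameter candidate in $a$ and $\lambda$, and then arguing that since the premium $(1+\theta)\mathbb E^{\mathbb Q}[I(Y)]=(1+\theta)I(\VaR_\alpha(Y))$ depends only on $a$, the insurer should push $I$ up pointwise everywhere else, which collapses the $\lambda$-dependence and yields $I^*(y)=y\wedge a+(y-\VaR_\alpha(Y))_+$). Your proof extracts exactly that "push $I$ up pointwise" observation and makes it the whole argument: once you notice that $\mathbb Q$ is a point mass at $v=\VaR_\alpha(Y)$ so the premium is $(1+\theta)I(v)$, the pointwise majorant $I_a(y)=y\wedge a+(y-v)_+$ within $\{I\in\mathcal C: I(v)=a\}$ dominates, and since both retained-loss terms are pointwise decreasing in $I$, the reduction to the scalar problem is immediate. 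This bypasses Theorem \ref{form} entirely, which is a genuine simplification here since the Dirac LR sits uneasily within Assumption \ref{ass1} and the derivative-based partition underlying the theorem. Your subsequent calculus (convexity from $H''(a)=f(a)+\gamma e^{r(T-t)}S(a)>0$, boundary value $H'(v)=\theta+1-\alpha>0$, and the equivalence of the two forms of $H'(a)$ via $\int_a^v S(y)\,\mathrm dy=\int_a^v(y-a)\,\mathrm dF(y)+(v-a)\alpha$) matches the paper's computations and correctly identifies $a^*$.
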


\begin{remark} 
If the reinsurance premium is determined using VaR, the optimal indemnity function takes the form of a dual truncated excess-of-loss coverage, as given in \eqref{eq:dual1}. Specifically, the insurer transfers all risk to the reinsurer up to a certain threshold $a^*_t$, and partially transfers losses exceeding the deductible $\operatorname{VaR}_\alpha(Y)$. This means that the retained loss after reinsurance is bounded by the condition $Y - I^*(t,Y) \leqslant \VaR_\alpha(Y) - a^*_t$. This structure is reasonable because the premium is based on VaR, and any losses exceeding the VaR are essentially covered at no additional cost. Moreover, from the proof of Proposition \ref{prop:4}, we observe that if $\theta \geqslant \gamma e^{r(T-t)} \left( \mathbb{E}[Y] + \alpha\operatorname{VaR}_\alpha(Y) - \alpha\operatorname{ES}_\alpha(Y) \right)$, then $ a^*_t = 0 $, meaning that the optimal indemnity function takes the form of  excess-of-loss. This occurs because, when the premium is relatively higher, the insurer will choose not to buy reinsurance for smaller losses.
\end{remark}

Next, we continue with  ES, i.e.,  the   distortion function is given by 
\begin{align*}
	g(y)=\dfrac{y}{\alpha}\chi_{[0,\alpha)}(y)+\chi_{[\alpha,1]}(y).
\end{align*}
In this case,  we have $$F^{\mathbb Q}(y)=1-g(S(y))=\chi_{[\VaR_{\alpha}(Y),\infty)}(y)(F(y)+\alpha-1)/\alpha.$$  Further, the  likelihood ratio  function is   defined as
\begin{align*}
	\mathrm{LR}(y)=\left\{
	\begin{aligned}
		&0,&y &\in[0,\VaR_{\alpha}(Y)),\\
		&\dfrac{1}{\alpha},&y &\in [\VaR_{\alpha}(Y),\infty).
	\end{aligned}
	\right.
\end{align*}
\begin{proposition}\label{prop:5}
 Assume that $g(y)=\chi_{(\alpha,1]}(y)$, i.e.,  the premium is calculated by $\ES$. Then  the optimal form of the  indemnity function for Problem \ref{pro:1}  is given by    
\begin{align*}
	I^*(t, y)=y\wedge a^*_t+(y-b^*_t)_+.
\end{align*} Moreover, let $I^*_{a,b}(y)=y\wedge a+(y-b)_+$, then the optimal parameters $(a^*_t,b^*_t)$ are given by
$$(a^*_t,b^*_t)\in \argmin\limits_{a,b}\left\{(1+\theta)\ES_\alpha(I^*_{a,b}(Y))+\mathbb E[Y-I^*_{a,b}(Y)]+\dfrac{\gamma e^{r(T-t)}}{2}\mathbb E[(Y-I^*_{a,b}(Y))^2]\right\},$$
subject to $0\leqslant a\leqslant \VaR_{\alpha}(Y)$ and $\VaR_{\alpha}(Y)\leqslant b\leqslant \max\{a+\frac{1+\theta}{\alpha\gamma e^{r(T-t)}},\VaR_{\alpha}(Y)\}$.
\end{proposition}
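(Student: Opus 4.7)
The plan is to apply Theorem \ref{form} directly to the likelihood ratio induced by the ES-based premium, and then simplify the resulting formulas to the two-parameter form stated. Since $g(y)=(y/\alpha)\wedge 1$, the likelihood ratio is piecewise constant: $\LR(y)=0$ on $[0,\VaR_\alpha(Y))$ and $\LR(y)=1/\alpha$ on $[\VaR_\alpha(Y),\infty)$, with a single jump at $\VaR_\alpha(Y)$. Consequently $\phi_\lambda(y)=y+(\lambda-(1+\theta)\LR(y))/(\gamma e^{r(T-t)})$ has slope $1$ wherever $\LR$ is differentiable, so both pieces fall into the regime $j=2$ of Theorem \ref{form}. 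Following the convention of including every non-differentiable point of $\LR$ in the partition, I take $y_0=0$ and $y_1=\VaR_\alpha(Y)$, giving sub-intervals $S_1=[0,y_1]$ and $S_2=[y_1,\infty)$ with $j_1=j_2=2$.

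Next, I introduce $a:=I^*(y_1)$, $c_1:=\lambda/(\gamma e^{r(T-t)})$, $c_2:=c_1-(1+\theta)/(\alpha\gamma e^{r(T-t)})$, and $b:=a-c_2$. Applying case (v) of Theorem \ref{form} on $S_1$ yields $I^*(y)=\min\{\max\{y+c_1,\,y+a-y_1,\,0\},\,y,\,a\}$, which, using $a\le y_1$, collapses to $y\wedge a$ whenever $c_1\ge 0$. Applying case (ii) on $S_2$ yields $I^*(y)=\min\{a+y-y_1,\,\max\{a,\,y+c_2\}\}$, which splits at $y=b$: for $y\in[y_1,b]$ the inner maximum equals $a$, and the outer minimum equals $a$ (since $a+y-y_1\ge a$); for $y\ge b$ the inner maximum equals $y+c_2=a+y-b$, and since $b\ge y_1$ the outer minimum equals $a+y-b$. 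Gluing the two pieces gives $I^*_t(y)=y\wedge a+(y-b)_+$, provided $c_1\ge 0$ and $b\ge y_1$.

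The admissible region for $(a,b)$ then follows from these feasibility conditions. Incentive compatibility with $I^*(0)=0$ and $I^*\in\mathcal C$ forces $0\le a\le\VaR_\alpha(Y)$; the requirement $b\ge\VaR_\alpha(Y)$ was just noted; and $c_1\ge 0$ translates into $b=a-c_2\le a+(1+\theta)/(\alpha\gamma e^{r(T-t)})$. When this upper bound falls below $\VaR_\alpha(Y)$, the only feasible choice is $b=\VaR_\alpha(Y)$, which is precisely what the $\max\{\,\cdot\,,\VaR_\alpha(Y)\}$ in the statement encodes. Once the family is established, $(a^*,b^*)$ is obtained by minimizing the ES-specialized $H(t,I^*_{a,b})$ over this rectangle.

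The main obstacle is the careful case analysis in the simplification, especially ruling out $c_1<0$: in that regime the $S_1$ formula yields $I^*(y)=(y+c_1)_+\wedge a$, a truncated excess-of-loss shape rather than $y\wedge a$. To justify restricting to $c_1\ge 0$, I would argue that any candidate with $c_1<0$ is dominated in $H(t,\cdot)$ by the choice $c_1=0$: since $\LR=0$ on $[0,y_1)$, the premium cost of extending coverage downward on $S_1$ vanishes while both the mean and variance terms strictly improve, so it is always advantageous to cover down to zero. A secondary issue is the boundary case $b=\VaR_\alpha(Y)$, where the $[y_1,b]$ plateau degenerates; here one checks directly that the limiting formula remains a valid element of $\mathcal C$ and is correctly handled by the $\max$ encoding.
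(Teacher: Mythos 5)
Your argument is correct and follows essentially the same route as the paper: partition $[0,\infty)$ at $\VaR_\alpha(Y)$, apply cases (v) and (ii) of Theorem \ref{form}, reparametrize the output in terms of $a=I^*(\VaR_\alpha(Y))$ and $b$, and then eliminate $\lambda<0$ by a dominance argument exploiting the fact that $\mathbb E^{\mathbb Q}[I(Y)]=\ES_\alpha(I(Y))$ depends only on $I$ restricted to $[\VaR_\alpha(Y),\infty)$. The paper makes the dominance step slightly more explicit by constructing the competitor $\widetilde I$ that replaces $I^*$ with $y\wedge a$ on $[0,\VaR_\alpha(Y))$, but this is the same idea you sketch, and your handling of the degenerate boundary $b=\VaR_\alpha(Y)$ via the $\max$ clause matches the paper's two-case analysis.
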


\begin{remark}The optimal indemnity function for $c = (1+\theta)\ES_\alpha$ (with $\alpha \in (0,1)$) has a similar structure to that for $c = (1+\theta)\VaR_\alpha$, but with a more complex choice of the deductible parameter  $b^*_t$, beyond which the insurer   transfers all claims exceeding a deductible $b^*_t-a^*_t$.\end{remark}
\section{Numerical illustrations}\label{sec:5}
In this section, we investigate the impact of belief heterogeneity, where the insurer and reinsurer hold different beliefs about the parameters of the same underlying claim distribution. This discrepancy can arise when both parties assume the claim $Y$
  follows the same type of distribution, but with distinct parameter estimates based on their respective information or perspectives. 
  
  For simplicity, we assume that the claim amount $Y$ follows an exponential distribution, a widely used model in risk analysis due to its simplicity and ability to capture the general characteristics of insurance claims. The probability density functions  for the insurer and the reinsurer are given by:
\begin{align*}
	f(y)=\dfrac{1}{\theta_1}e^{-\frac{y}{\theta_1}},~~\text{ and }f^{\mathbb Q}(y)=\dfrac{1}{\theta_2}e^{-\frac{y}{\theta_2}},
\end{align*}
where $\theta_1>0$
  and $\theta_2>0$
  represent the scale parameters for the insurer and the reinsurer, respectively. A larger $\theta_i$ indicates a higher average claim size.   The likelihood ratio between the two distributions is given by: \begin{equation}\label{LR:exp}
      \mathrm{LR}(y)=\dfrac{\theta_1}{\theta_2}e^{(\frac{1}{\theta_1}-\frac{1}{\theta_2})y},\end{equation} and  its derivative is $$\mathrm{LR}'(y)=\dfrac{\theta_2-\theta_1}{\theta_2^2}e^{(\frac{1}{\theta_1}-\frac{1}{\theta_2})y}.$$ Next, we summarize the optimal reinsurance strategy based on the relationship between  $\theta_1$ and $\theta_2$.
\begin{proposition}\label{Prop:6}Assume that  the likelihood
ratio is given by \eqref{LR:exp}.   For Problem \ref{pro:1}, we have the following: 
\begin{itemize}
\item[(i)] If $\frac{\theta_1}{\theta_2}\geq 1$, then the optimal indemnity function is $I^*(t, y)=(y-d^*_t)_+,$ where \begin{equation}\label{eq:d_ex1}d^*_t=\inf\left\{d\geq 0: 1+\gamma e^{r(T-t)}d-(1+\theta)e^{-(\frac{1}{\theta_2}-\frac{1}{\theta_1})d}\geq0\right\}.\end{equation}
\item[(ii)] If $\frac{\theta_1}{\theta_2}\leq1-\gamma e^{r(T-t)}\frac{\theta_2} {1+\theta}$, then the optimal indemnity function is  $I^*(t,y)=x\wedge d^*_t$, where 
\begin{equation}\label{eq:d_ex2}d^*_t=\left\{\begin{aligned}&0,   ~~&\theta \geq \gamma\theta_1 e^{r(T-t)},\\ &\frac{\theta_1\theta_2}{\theta_2-\theta_1}\ln\frac{1+\gamma e^{r(T-t)}\theta_1}{1+\theta}, &  \theta < \gamma\theta_1 e^{r(T-t)}.\end{aligned}\right.\end{equation}
\item[(iii)] If $1-\gamma e^{r(T-t)}\frac{\theta_2} {1+\theta} < \frac{\theta_1}{\theta_2}<1$, then the
optimal form of the indemnity function  is given by 
\begin{align}\label{eq:I_abl}
I^*(t, y)=\min\{\max\{\phi_{\lambda^*_t}\chi_{\{y\leqslant y_1\}},a^*_t+y-y_1,0\},y,a^*_t\}+(y-y_1)_+-(y-d^*_t)_+,
\end{align}
where  \begin{equation}\label{eq:phi}
    \phi_{\lambda}(t,y)=y-\frac{(1+\theta)\theta_1}{\theta_2\gamma e^{r(T-t)}}e^{(\frac{1}{\theta_1}-\frac{1}{\theta_2})y}+\frac{\lambda}{\gamma e^{r(T-t)}},\end{equation}
and 
$$y_1=\frac{\theta_1\theta_2}{\theta_2-\theta_1}r(T-t)+\frac{\theta_1\theta_2}{\theta_2-\theta_1}\ln\frac{\gamma\theta_2^2}{(1+\theta)(\theta_2-\theta_1)}.$$ Moreover, let $I_{a,d,\lambda}(t,y)=\min\{\max\{\phi_{\lambda}\chi_{\{y\leqslant y_1\}},a+y-y_1,0\},y,a\}+(y-y_1)_+-(y-d)_+$, then we have $$(a^*_t,d^*_t,\lambda^*_t)=\argmin_{(a,d,\lambda)\in[0,y_1]\times\in[y_1,\infty)\times\mathbb R} H(t,I^*_{a,d,\lambda}(t,Y)),$$
where  \begin{equation}\label{eq:H1}H(t,I^*_{a,d,\lambda}(t,Y))=(1+\theta)\mathbb E[I^*_{a,d,\lambda}(t,Y)\mathrm{LR}(Y)]+\mathbb E[Y-I^*_{a,d,\lambda}(t,Y)]+\dfrac{\gamma e^{r(T-t)}}{2}\mathbb E[(Y-I^*_{a,d,\lambda}(t,Y))^2].\end{equation}
\end{itemize}
\end{proposition}

\begin{remark} The relationship between $I^*$ in equation  \eqref{eq:I_abl}  and the parameters $a$, $d$, and $\lambda$ is illustrated in Figure \ref{fig:expr}.  Note that $\phi_{\lambda}(t,\cdot)$ is an increasing concave function on $[0,y_1]$ with $\frac{\partial \phi_{\lambda}(t,y)}{\partial y}\in[0,1]$. As mentioned below Theorem \ref{form},  for any fixed values of $a$ and $d$, the value of $I^*$ remains unchanged when $$\lambda\leqslant \gamma e^{r(T-t)}(a-y_1)+(1+\theta)\frac{\theta_1}{\theta_2}e^{(\frac{1}{\theta_1}-\frac{1}{\theta_2})(y_1-a)}=:\lambda_0,$$ or $$\lambda\geqslant (1+\theta)\frac{\theta_1}{\theta_2}e^{(\frac{1}{\theta_1}-\frac{1}{\theta_2})a}=:\lambda_1.$$ Thus, we can further narrow down the range of $\lambda$ to $[\lambda_0,\lambda_1]$.
\begin{figure}[!htbp]
\centering
\includegraphics[scale=0.48]{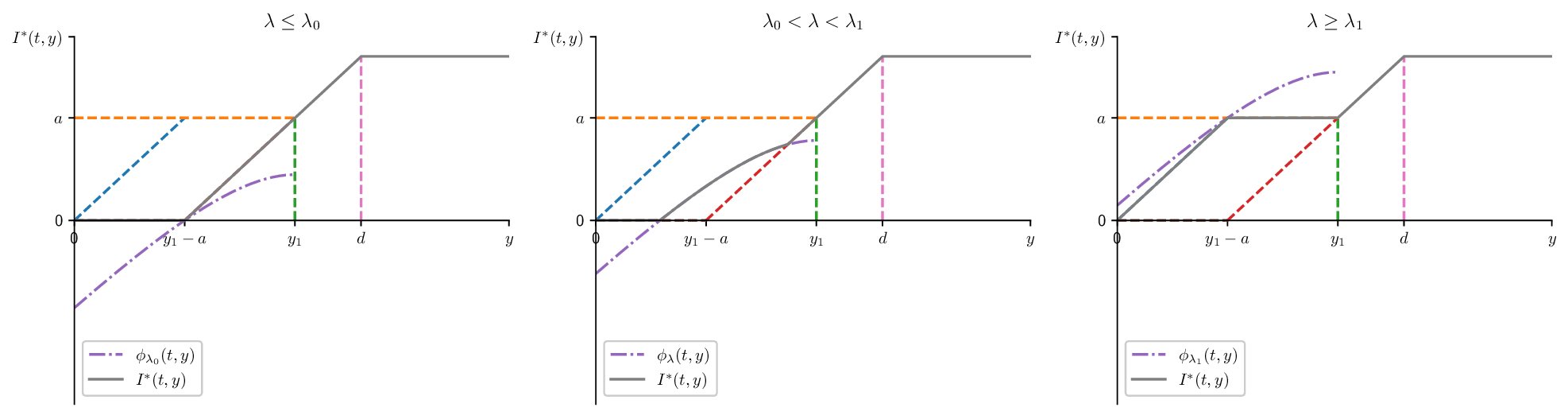}
\caption{The relationship between $I^*$ and the parameters $a$, $d$, and $\lambda$}
\label{fig:expr}
\end{figure}
\end{remark}

The reinsurance strategy described in Proposition \ref{Prop:6} is   sensitive to the relationship between the
scale parameters $\theta_1$ and $\theta_2$.  
 \begin{itemize}
 \item  In Case (i), where, $\frac{\theta_1}{\theta_2} \geq 1$, the optimal indemnity function  takes the form of an excess-of-loss structure, with the insurer opting to transfer all claims exceeding a deductible $d^*_t$. A numerical illustration of this is provided in the left panel of Figure \ref{fig:exp}. This result is intuitive, as the insurer perceives claim severity to be higher than the reinsurer's assessment. Moreover, we can see from \eqref{eq:d_ex1}  that  $d^*_t$  depends on both $\theta_1$ and $\theta_2$, specifically,  decreasing  with $\theta_1$ and increasing  with $\theta_2$, which aligns with expectations.    

Additionally, we observe that the deductible  $d^*_t$  decreases with   $\gamma$ and $r$, but increases with $\theta$ and $t$. These dynamics are consistent with the reasoning presented in Remark  \ref{rem:2}.  
A lager $\gamma$ reflects a greater risk aversion on the part of the insurer, which drives the insurer to transfer more risk to the reinsurer. 
A higher $\theta$ leads to higher premiums, incentivizing the insurer to retain a larger share of the claims. Furthermore, {a higher 
$r$  reduces the insurer's willingness to accept larger risks, as he prefers to allocate more capital to investments rather than to claims.}    Finally, as time 
$t$ approaches the maturity date 
$T$, the insurer's confidence increases in the likelihood of large claims materializing in the short term, prompting  the insurer to retain more risk. These explanations also apply to the following two cases.

 \item In Case (ii), where   $\frac{\theta_1}{\theta_2} \leq 1 - \gamma e^{r(T-t)}\frac{\theta_2}{1+\theta}$, 
 the optimal indemnity function follows a limited-loss structure, as shown in the middle panel of Figure  \ref{fig:exp}. This observation is also intuitively grounded.  The insurer perceives the claims to be less severe, while the reinsurer, believing the claims could be more severe.
  Consequently, the insurer retains all claims above a fixed threshold $d^*_t$ as given by \eqref{eq:d_ex2}. Notably, when  $\theta>\gamma\theta_1e^{r(T-t)}$, the insurer  chooses not to purchase any reinsurance, opting to retain the entire risk.

\item In case (iii), where $1 - \gamma e^{r(T-t)}\frac{\theta_2}{1+\theta} < \frac{\theta_1}{\theta_2} < 1$,
  the insurer perceives the claims to be less severe than the reinsurer, but the difference is not large enough to make the insurer's decision fully dominated by the reinsurer's premium. As a result, the optimal indemnity function adopts a more complex piecewise structure, reflecting a nuanced allocation of risk between the insurer and reinsurer, balancing their differing assessments of claim severity.   A numerical  example of this is provided in the right panel of Figure \ref{fig:exp}.
\end{itemize}

 The numerical solutions for the three cases described above are shown in Figure \ref{fig:exp}.  For our numerical experiments, we set the following parameters:
\begin{itemize}
\item Case (i):  $\theta_1=2$, $\theta_2=1$, $\gamma=1$, $r=0.1$, $T=10$, and $\theta=0.35$. 
\item Case (ii):  $\theta_1=0.5$, $\theta_2=1$, $\gamma=0.1$, $r=0.1$, $T=10$, and $\theta=0.05$. 
\item Case (iii):  $\theta_1=1.5$, $\theta_2=2$, $\gamma=0.5$, $r=0.1$, $T=10$, and $\theta=0.35$.
\end{itemize}

\begin{figure}[!htbp]
\centering
\includegraphics[width=\textwidth]{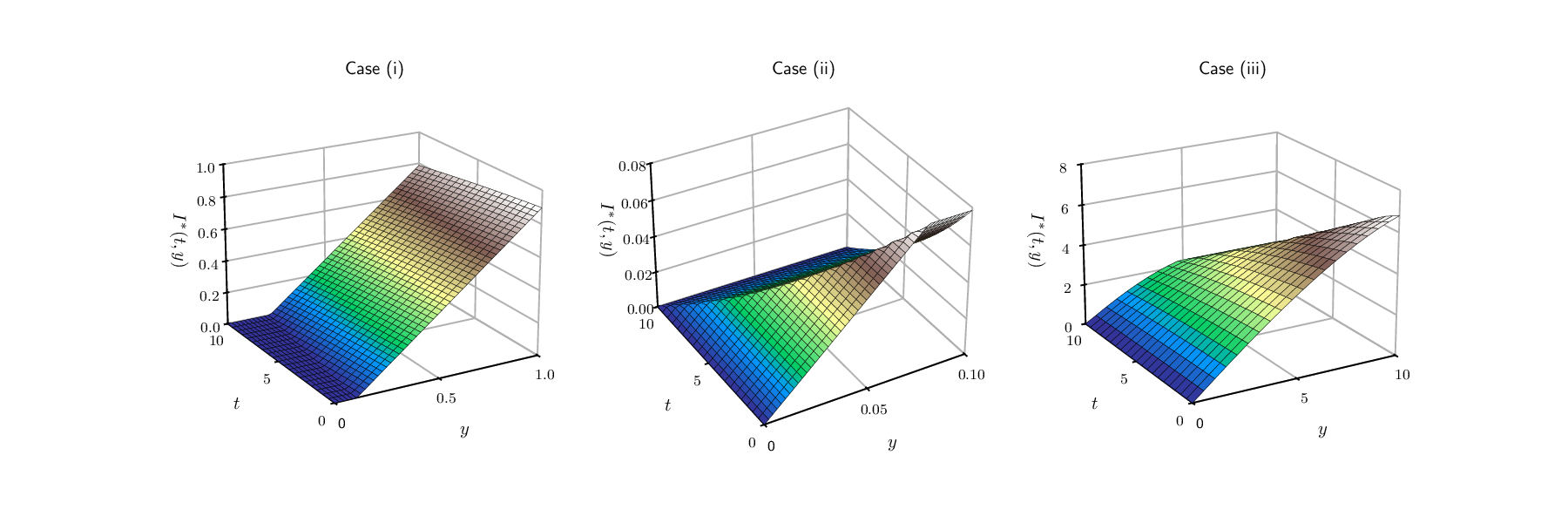}
\caption{The   optimal indemnity function $I^*$ under different parameter settings}
\label{fig:exp}
\end{figure}

Recall from Corollary \ref{coro:1} that,  if $\mathbb P=\mathbb Q$, the optimal equilibrium strategy is given by   $ \overline I^*(t, y)=(x-\overline d^*_t)_+,$ where $\overline d^*_t=\frac{\theta} {\gamma e^{r(T-t)}}.$
In addition, by Corollary \ref{coro:2}, 
 the  optimal indemnity function, $\widetilde I^*$,   in the absence of the incentive compatibility constraint, can be solved numerically by   $$\widetilde{I}^*(t,y)= \min\{y,\max\{0,\phi_{\lambda^*_t}(t,y)\}\},$$  where  $\phi_{\lambda}(t,y)$ is defined by \eqref{eq:phi}. The parameter $\lambda^*_t$ can be solved by  
 $\lambda^*_t=\argmin\limits_{\lambda\in\mathbb R} H(t,I_{\lambda}(t,Y))$, where  
$H$ is given by \eqref{eq:H1} and $I_{\lambda}(t,Y)=\min\{y,\max\{0,\phi_{\lambda}(t,y)\}\}$.

Next, we compare our strategy with the two special cases. For this purpose, we set the parameters as above, and plot the corresponding strategies as functions of $y$. In particular, we fix $t=5$.   
\begin{figure}[!htbp]
\centering
\includegraphics[scale=0.5]{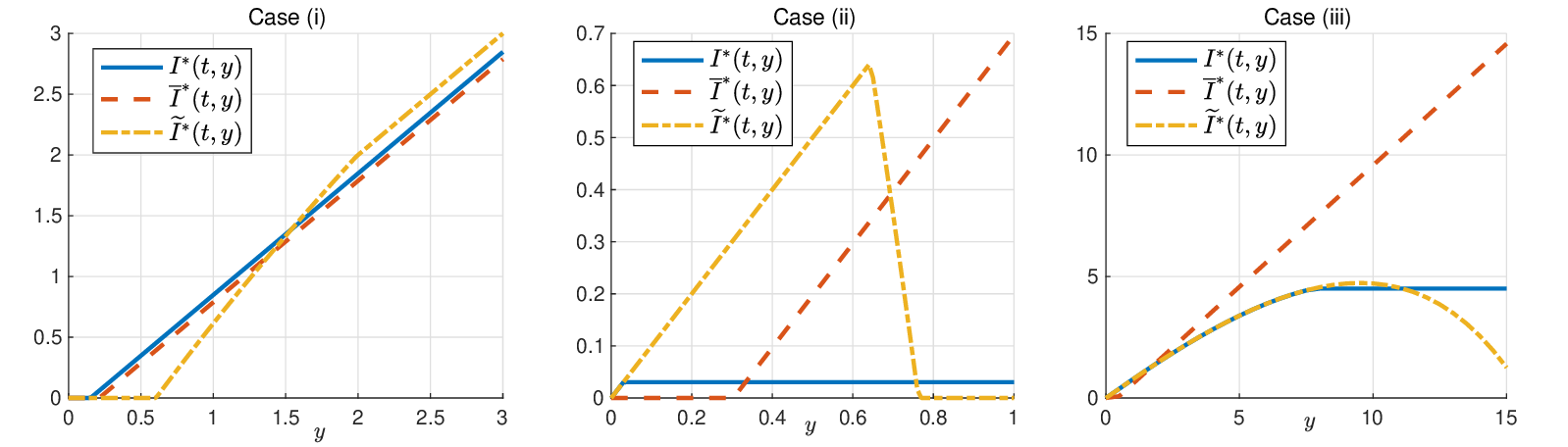}
\caption{Comparison of different strategies with varying 
$y$-values}
\label{fig:exp2}
\end{figure}

We observe from the left panel of Figure \ref{fig:exp2} that both  $I^*$ and $\overline I^*$ exhibit an excess-of-loss structure. Since the insurer perceives claim severity to be higher than the reinsurer's assessment, it is expected that the insurer will transfer more risk to the reinsurer compared to the belief heterogeneity case. Moreover, we observe that the rate of increase of  $\widetilde I^*$  can exceed 1, which suggests the presence of moral hazard. Without any binding constraints, the insurer is more inclined to retain all risk in the case of small risks, while transferring more risk in the case of large risks.

{In the middle and right panels of Figure \ref{fig:exp2}, $ \widetilde{I}^*$ initially increases and then decreases as $y$ rises. This behavior arises because the insurer perceives the claims as less severe than the reinsurer does. Without moral hazard constraints, the insurer has more flexibility in balancing the trade-off between risk and premium.  As  $ y$ increases, the insurer initially opts to transfer more risk to the reinsurer to limit exposure to larger claims. However, as the cost of reinsurance rises, the insurer becomes less willing to purchase additional coverage, ultimately reducing or eliminating reinsurance purchases.}

\section{Conclusion}\label{sec:6}
This paper makes a valuable contribution to the literature by being the first to investigate continuous-time mean-variance reinsurance design under belief heterogeneity, thereby enhancing the understanding of reinsurance contracts in real-world settings. 
Unlike much of the existing literature, where moral hazard is typically mitigated through predefined contract structures, we show that introducing heterogeneous beliefs complicates the reinsurance design and leads to the emergence of moral hazard, which requires careful consideration of an incentive compatibility constraint.

Our key findings are as follows:
(1) The optimal contracts exhibit more complex structures than the standard proportional and excess-of-loss reinsurance commonly studied in the literature, incorporating partial reinsurance across multiple layers (Theorem \ref{form}).
(2) Under mild assumptions, we establish that the equilibrium strategy is both unique and continuous over time (Theorem \ref{thm3}).
(3) When belief heterogeneity is described by the distortion function, we find that the optimal indemnity function takes the form of excess-of-loss for convex distortion functions, and adopts a dual truncated excess-of-loss form when the premium is calculated by VaR and ES (Propositions \ref{cov-dis}-\ref{prop:5}).
(4) Through numerical comparisons, we demonstrate that, compared to models that omit belief heterogeneity, our model better captures the insurer's decision-making process under various risk scenarios  (Section \ref{sec:5}).  Furthermore, since our model inherently avoids moral hazard, it provides a more reasonable and realistic framework for reinsurance design.

\vspace{1cm}
\noindent
{\large \bf Disclosure statement}

\vspace{0.2cm}
\noindent
 No potential conflict of interest was reported by the authors.
 
 \noindent
 
 \vspace{1cm}
\noindent{\large {\bf Acknowledgments}}
 \vspace{0.3cm}
\noindent

The  research of  Junyi Guo is supported by the National Natural Science Foundation of China (Grant No  12271274). The  research of  Xia Han  is supported by the National Natural Science Foundation of China (Grant Nos. 12301604, 12371471, and 12471449).    

\section*{Appendix}
\label{appendix}
\begin{proof}[Proof of Proposition \ref{sol_exist}]
	Let $m$ be the infimum of \eqref{eq:H}, and  let $\{I_n\}_{n=1,2,...}\subset\mathcal C$ be a sequence such that, for each  $n$, 
	\begin{align*}
		(1+\theta)\mathbb E^{\mathbb Q}[I_n(Y)]+\mathbb E[Y-I_n(Y)]+\dfrac{\gamma e^{r(T-t)}}{2}\mathbb E[(Y-I_n(Y))^2]<m+\dfrac{1}{n}.
	\end{align*}
	By the definition of $\mathcal C$, we know that on any compact interval $[0,M]$, $\{I_n\}$ is equicontinuous and uniformly bounded by $M$. Thus, according to the Arzelà-Ascoli theorem, the sequence $\{I_n\}$ is precompact in $C([0,M])$ and there exists a subsequence of $\{I_n\}$ that converges uniformly on $[0,M]$. Next we use the diagonal argument to find a point-wise convergent sequence. 
	
	For each integer $k$, consider the compact interval $[0, k]$:
\begin{itemize}
    \item By the Arzelà-Ascoli theorem, there exists a subsequence $\{I^{(1)}_n\} \subset \{I_n\}$ that converges uniformly on $[0, 1]$ to some function $I^{(1)}$.
    \item From $\{I^{(1)}_n\}$, we extract a further subsequence $\{I^{(2)}_n\} \subset \{I^{(1)}_n\}$ that converges uniformly on $[0, 2]$ to some function $I^{(2)}$.
    \item Repeat this process for each $k$, producing a nested sequence of subsequences:
    \[
    \{I^{(1)}_n\} \supset \{I^{(2)}_n\} \supset \{I^{(3)}_n\} \supset \cdots,
    \]
    where $\{I^{(k)}_n\}$ converges uniformly on $[0, k]$ to a function $I^{(k)}$.
\end{itemize}
Let $\{I_{n_j}\}$ be the diagonal subsequence, that is, $I_{n_j}=I^{(j)}_j$. Then $\{I_{n_j}\}$ converges uniformly on every compact interval $[0,k]$, and thus $\{I_{n_j}\}$ converges pointwise to a continuous function $I^*\in C([0,\infty))$. Further, we have 
\begin{align*}
	I^*(0)=\lim\limits_{j\rightarrow\infty}I_{n_j}(0)=0,
\end{align*}
and for any $x\leqslant y$,
\begin{align*}
	0\leqslant I^*(y)-I^*(x) = \lim\limits_{j\rightarrow\infty}I_{n_j}(y)-I_{n_j}(x)\leqslant y-x.
\end{align*}
Then $I^*\in\mathcal C$. By the dominated convergence theorem, $I^*$ realizes the infimum.

We now prove the uniqueness of the solution.  Assume, for the sake of contradiction, that  $I_1$ and $ I_2$  are  distinct solutions that both attain the infimum of  \eqref{eq:H}.  Consider the convex combination of $ I_1 $ and $ I_2 $ defined as
$$
\widetilde I_{\lambda}(x) = \lambda I_1 (x)+ (1 - \lambda) I_2(x), \lambda \in [0, 1].
$$ The we have
$$\begin{aligned}
H(t, \widetilde I_{\lambda})& =(1+\theta)\mathbb E^{\mathbb Q}[\widetilde I_{\lambda}(Y)]+\mathbb E[Y-\widetilde I_{\lambda}(Y)]+\dfrac{\gamma e^{r(T-t)}}{2}\mathbb E[(Y-\widetilde I_{\lambda}(Y))^2]\\&<\lambda( (1+\theta)\mathbb E^{\mathbb Q}[ I_1(Y)]+\mathbb E[Y-I_1(Y)]+\dfrac{\gamma e^{r(T-t)}}{2}\mathbb E[(Y-I_1(Y))^2])\\&~+(1-\lambda)( (1+\theta)\mathbb E^{\mathbb Q}[ I_2(Y)]+\mathbb E[Y- I_2(Y)]+\dfrac{\gamma e^{r(T-t)}}{2}\mathbb E[(Y-I_2(Y))^2])\\&=H(t,I_1)=H(t,I_2),  
\end{aligned}$$ which leads to a contradiction. The strict inequality follows from Jessen's inequality. Therefore, we conclude that  $I_1(Y)=I_2(Y)$ almost surely. 

\end{proof}

\begin{proof}[Proof of Lemma \ref{L}]
Since  $I(y)=\int_0^yI'(s)\d s$, we have 
\begin{align*}
\begin{split}
	&\gamma e^{r(T-t)}\mathbb E[I^*(t,Y)I(Y)]	-\gamma e^{r(T-t)}\mathbb E[YI(Y)]+(1+\theta)\mathbb E^{\mathbb Q}[I(Y)]-\lambda\mathbb E[I(Y)]\\
	=&\int_0^{\infty}(\gamma e^{r(T-t)}I^*(t,y)-\gamma e^{r(T-t)}y-\lambda)I(y)\d F(y)+(1+\theta)\int_0^{\infty}I(y)\d F^{\mathbb Q}(y)\\
	=&\int_0^{\infty}(\gamma e^{r(T-t)}I^*(t,y)-\gamma e^{r(T-t)}y-\lambda)\int_0^yI'(s)\d s\d F(y)+(1+\theta)\int_0^{\infty}\int_0^yI'(s)\d s\d F^{\mathbb Q}(y)\\
	=&\int_0^{\infty}\int_s^{\infty}(\gamma e^{r(T-t)}I^*(t,y)-\gamma e^{r(T-t)}y-\lambda)\d F(y)I'(s)\d s+(1+\theta)\int_0^{\infty}\int_s^{\infty}\d F^{\mathbb Q}I'(s)\d s\\
	=&\int_0^{\infty}L(s; I^*,\lambda)I'(s)\d s.
\end{split}
\end{align*}
Therefore, the optimization problem \eqref{argmin} can be written as
\begin{align*}
	I^*\in \argmin_{I\in\mathcal C}\int_0^{\infty}L(s; I^*,\lambda)I'(s)ds.
\end{align*}
To minimize this integral, a necessary and sufficient condition is that
\begin{align*}
	I'(s)=\left\{
	\begin{aligned}
		&1,\ \ &\text{if }L(s;  I^*,\lambda)<0,\\
		&0,\ \ &\text{if }L(s;  I^*,\lambda)>0,\\
		&\xi(s),&\text{if }L(s;  I^*,\lambda)=0,
	\end{aligned}
	 \right.
\end{align*}
where $\xi(s)$  could be any $[0,1]$-valued Lebesgue-measurable function  such that $I^* \in \mathcal{C}$. This condition corresponds to equation \eqref{dI}, completing the proof.
\end{proof}

\begin{proof}[Proof of Theorem \ref{form}]

{We first emphasize that, due to the incentive compatibility condition, we restrict our search for  strategies to the class $\mathcal{C}= \{f:  \R_+\to\R_+   \mid  f(0)=0, 0\leqslant f(y)-f(x)\leqslant y-x, \forall x\leqslant y\}$, where the strategies are continuous. Consequently, the resulting strategy $I^*$ is continuous with respect to $y$. Once the values of $I^*(t,\cdot)$ on the interior of each interval are known, its values at the endpoints can be naturally determined by continuity.

}

Now we  will apply Lemma \ref{L} to complete the proof. To begin,  we have that on every sub-interval
\begin{align*}
	L'(s;I^*,\lambda)=(\lambda+\gamma e^{r(T-t)}s-\gamma e^{r(T-t)}I^*(t,s)-(1+\theta)\text{LR}(s))f(s).
\end{align*}
Let
\begin{align*}
	K(s):=\lambda+\gamma e^{r(T-t)}s-\gamma e^{r(T-t)}I^*(t,s)-(1+\theta)\text{LR}(s),
\end{align*}
so that 
\begin{align*}
	K'(s)=\gamma e^{r(T-t)}-\gamma e^{r(T-t)}\frac{\partial I^*}{\partial y}(t,s)-(1+\theta)\text{LR}'(s).
\end{align*}
Note that $K(s)$ and $L'(s;I^*,\lambda)$ have the same sign.  Thus,  when considering  the sign of $L'(s;I^*,\lambda)$, it suffices to focus on the sign of 
  $K(s)$.\\

\noindent (i) If $\frac{\partial \phi_{\lambda}}{\partial y}(t,y)>1$ on ${(}y_{m_t},\infty)$, then $\text{LR}'(y)<0$. Consequently,  $K'(s)>0$, implying that $K(s)$ is a {strictly} increasing function.  Thus,  $K(s)=0$ can hold at most at one  point. This shows that   $L(s;I^*,\lambda)=0$ is impossible on any sub-interval of ${(}y_{m_t},\infty)$. 

Next we prove that $L(s;I^*,\lambda)$ cannot up-cross the $s$-axis on ${(}y_{m_t},\infty)$. If this were not true, there would exist a point $s^*$ such that
\begin{align*}
	L(s^*;I^*,\lambda)>0,\ L'(s^*;I^*,\lambda)\geqslant 0.
\end{align*}
This implies $K(s^*)\geqslant 0$, and since $K(s)$ is {strictly} increasing, we  have  $K(s)\geqslant 0$ for all $[s^*,\infty)$ . Consequently, $L'(s;I^*,\lambda)\geqslant 0$ on $[s^*,\infty)$ and 
\begin{align*}
	L(s^*;I^*,\lambda)=\int_{s^*}^{\infty}-L'(s;I^*,\lambda)\d s\leqslant 0,
\end{align*}
which contradicts the assumption that  $L(s^*;I^*,\lambda)>0$.

Let $s_0:=\inf\{s\in{(}y_{m_t},\infty)|L(s;I^*,\lambda)\leqslant 0\}$. Then $L(s;I^*,\lambda)>0$ on ${(}y_{m_t},s_0)$ and $L(s;I^*,\lambda)\leqslant 0$ on $[s_0,\infty)$. By Lemma \ref{L}, we know that $\frac{\partial I^*}{\partial y}(t,y)=\chi_{[s_0,\infty)}(y)$.\\

\noindent (ii) If $\frac{\partial \phi_{\lambda}}{\partial y}(t,y)\in[0,1]$ on ${(}y_{m_t},\infty)$, then $0\leq \text{LR}'(y)\leq \gamma e^{r(T-t)}/(1+\theta)$. We now show that $L(s;I^*,\lambda)$ cannot up-cross or  down-cross the $s$-axis.

Assume, for the sake of contradiction, that $L(s;I^*,\lambda)$  up-crosses the $s$-axis. Then there must exist a point $s^*\in{(}y_{m_t},\infty)$ such that 
\begin{align*}
	L(s^*;I^*,\lambda)>0,\ L'(s^*;I^*,\lambda)\geqslant 0.
\end{align*}
From this, we deduce that  $K(s^*)\geqslant 0$. By Lemma \ref{L} and the condition $L(s^*;I^*,\lambda)>0$, we know that $\frac{\partial I^*}{\partial y}(t,s^*)=0$. Consequently, $$K'(s^*)=\gamma e^{r(T-t)}-(1+\theta)\text{LR}'(s^*)\geqslant 0.$$ This implies $K(s)\geqslant 0$ and $L'(s^*;I^*,\lambda)\geqslant 0$ on $[s^*,\infty)$. Further, we have
\begin{align*}
	L(s^*;I^*,\lambda)=\int_{s^*}^{\infty}-L'(s;I^*,\lambda)\d s\leqslant 0, 
\end{align*}
which is a contradiction with $L(s^*;I^*,\lambda)>0$. Thus, $L(s;I^*,\lambda)$ cannot up-cross the $s$-axis. 

A similar argument can be made to show that $L(s;I^*,\lambda)$ cannot down-cross the $s$-axis. Thus, we conclude that if there exists a  point $\widetilde s$ such that $L(\widetilde s;I^*,\lambda)=0$, then $L(s;I^*,\lambda)=0$ for all $[\widetilde s,\infty)$.

On $[\widetilde s,\infty)$, we have $L'(s^*;I^*,\lambda)=0$, which implies that  $K(s)=0$, and hence $I^*(t,s)=\phi_{\lambda}(t,s)$. Let $s_0:=\inf\{s\in{(}y_{m_t},\infty)|L(s;I^*,\lambda)= 0\}$. Then $L(s;I^*,\lambda)\lessgtr 0$ on ${(}y_{m_t},s_0)$ and $L(s;I^*,\lambda)= 0$ on $[s_0,\infty)$.  More precisely, we have 
\begin{align*}
	\frac{\partial I^*}{\partial y}(t,y)=\left\{
	\begin{aligned}
		&\frac{\partial \phi_{\lambda}}{\partial y}(t,y)\chi_{[s_0,\infty)}(y)&\text{ if }L(s;I^*,\lambda)>0 \text{ on } {(}y_{m_t},s_0),\\
		&\chi_{{(}y_{m_t},s_0)}(y)+\frac{\partial \phi_{\lambda}}{\partial y}(t,y)\chi_{[s_0,\infty)}(y)&\text{ if }L(s;I^*,\lambda)<0 \text{ on } {(}y_{m_t},s_0).
	\end{aligned}
	\right.
\end{align*}

\noindent (iii) If $\frac{\partial \phi_{\lambda}}{\partial y}(t,y)<0$ on ${(}y_{m_t},\infty)$, then  and $\text{LR}'(y)> \gamma e^{r(T-t)}/(1+\theta)$. Similar to the previous case (i), we have $K'(s)<0$ on ${(}y_{m_t},\infty)$ and it is impossible that $L(s;I^*,\lambda)=0$ on any sub-interval of ${(}y_{m_t},\infty)$. 

Next, we prove that $L(s;I^*,\lambda)$ cannot down-cross the $s$-axis on ${(}y_{m_t},\infty)$. Suppose this is not true, then there must exist some point $s^*\in{(}y_{m_t},\infty)$ such that
\begin{align*}
	L(s^*;I^*,\lambda)<0,\ L'(s^*;I^*,\lambda)\leqslant 0.
\end{align*}
From this, we deduce that  $K(s^*)\leqslant 0$. Since $K'(s)<0$, we know that $L'(s^*;I^*,\lambda)\leqslant 0$ on $[s^*,\infty)$,  and therefore, 
\begin{align*}
	L(s^*;I^*,\lambda)=\int_{s^*}^{\infty}-L'(s;I^*,\lambda)\d s\geqslant 0,
\end{align*}
which contradicts the assumption that $L(s^*;I^*,\lambda)<0$. 

Let $s_1:=\inf\{s\in{(}y_{m_t},\infty)|L(s;I^*,\lambda)\geqslant 0\}$. Then $L(s;I^*,\lambda)<0$ on ${(}y_{m_t},s_1)$ and $L(s;I^*,\lambda)\geqslant 0$ on $[s_1,\infty)$. By Lemma \ref{L}, we conclude  that $\frac{\partial I^*}{\partial y}(t,y)=\chi_{{(}y_{m_t},s_1)}(y)$. \\

\noindent (iv) If $\frac{\partial \phi_{\lambda}}{\partial y}(t,y)>1$ on ${(}y_{i-1},y_i{)}$, then $K'(s)>0$ and it is impossible that $L(s;I^*,\lambda)=0$ on any sub-interval of ${(}y_{i-1},y_i{)}$. Further, $K(s)=0$ can have at most one root. Let $s_r$ denote this root, if it exists. If  $s_r\in{(}y_{i-1},y_i{)}$, we have $K(s)<0$ on ${(}y_{i-1},s_r)$ and $K(s)>0$ on $(s_r,y_i{)}$. If $s_r\notin {(}y_{i-1},y_i{)}$ or the root does not exist, then $L(s;I^*,\lambda)\lessgtr 0$ on ${(}y_{i-1},y_i{)}$. In conclusion, $L(s;I^*,\lambda)$ can cross the $s$-axis at most twice. Let
\begin{align*}
	s_{i,0}:=\inf\{s\in{(}y_{i-1},y_i{)}|L(s;I^*,\lambda)\leqslant 0\},\ s_{i,1}:=\inf\{s\in(s_{i,0},y_i{)}|L(s;I^*,\lambda)\geqslant 0\}.
\end{align*}
Then on ${(}y_{i-1},s_{i,0})$ and $(s_{i,1},y_i{)}$, we have $L(s;I^*,\lambda)>0$,  and on $(s_{i,0},s_{i,1})$, we have $L(s;I^*,\lambda)<0$. Therefore,
\begin{align*}
	\frac{\partial I^*}{\partial y}(t,y)=\chi_{[s_{i,0},s_{i,1}]}(y),
\end{align*}
which, combined with continuity of $I^*(t,\cdot)$, we derive the result.\\

\noindent (v) If $\frac{\partial \phi_{\lambda}}{\partial y}(t,y)\in[0,1]$ on ${(}y_{i-1},y_i{)}$, then $0\leq \text{LR}'(y)\leq \gamma e^{r(T-t)}/(1+\theta)$. Using the same argument in the proof of (ii), it follows that  if there exists some  point $s^*$ such that
\begin{align*}
	L(s^*;I^*,\lambda)>0,\ L'(s^*;I^*,\lambda)\geqslant 0,
\end{align*}
then $L(s;I^*,\lambda)>0$ for all $s\in [s^*,y_i{)}$. Similarly,  if there exists some $s^{**}$ such that
\begin{align*}
	L(s^{**};I^*,\lambda)<0,\ L'(s^{**};I^*,\lambda)\leqslant 0,
\end{align*}
then $L(s;I^*,\lambda)<0$ on $[s^{**},y_i{)}$. Thus, $L(s;I^*,\lambda)$ can be $0$ in at most one sub-interval $[s_{i,0},s_{i,1}]\subset{(}y_{i-1},y_i{)}$.

According to the sign of $L(s;I^*,\lambda)$ on ${(}y_{i-1},s_{i,0})$ and $(s_{i,1},y_i{)}$, and leveraging the continuity of $I^*$,  we have the following cases:
\begin{enumerate}[label=\arabic*)]
	\item If $L(s;I^*,\lambda)<0$ on ${(}y_{i-1},s_{i,0})$ and $L(s;I^*,\lambda)<0$ on $(s_{i,1},y_i{)}$, then $\frac{\partial I^*}{\partial y}(t,y)=\chi_{{(}y_{i-1},s_{i,0})}(y)+\frac{\partial \phi_{\lambda}}{\partial y}(t,y)\chi_{[s_{i,0},s_{i,1}]}(y)+\chi_{(s_{i,1},y_i{)}}(y)$.
	\item If $L(s;I^*,\lambda)<0$ on ${(}y_{i-1},s_{i,0})$ and $L(s;I^*,\lambda)>0$ on $(s_{i,1},y_i{)}$, then $\frac{\partial I^*}{\partial y}(t,y)=\chi_{{(}y_{i-1},s_{i,0})}(y)+\frac{\partial \phi_{\lambda}}{\partial y}(t,y)\chi_{[s_{i,0},s_{i,1}]}(y)$.
	\item If $L(s;I^*,\lambda)>0$ on ${(}y_{i-1},s_{i,0})$ and $L(s;I^*,\lambda)>0$ on $(s_{i,1},y_i{)}$, then $\frac{\partial I^*(t,y)}{\partial y}=\frac{\partial \phi_{\lambda}(t,y)}{\partial y}\chi_{[s_{i,0},s_{i,1}]}(y)$.
	\item If $L(s;I^*,\lambda)>0$ on ${(}y_{i-1},s_{i,0})$ and $L(s;I^*,\lambda)<0$ on $(s_{i,1},y_i{)}$, then $\frac{\partial I^*(t,y)}{\partial y}=\frac{\partial \phi_{\lambda}(t,y)}{\partial y}\chi_{[s_{i,0},s_{i,1}]}(y)\\+\chi_{(s_{i,1},y_i{)}}(y)$.
\end{enumerate}

\noindent (vi) If $\frac{\partial \phi_{\lambda}}{\partial y}(t,y)<0$ on ${(}y_{i-1},y_i{)}$, then $K'(s)<0$,  and it is impossible that $L(s;I^*,\lambda)=0$ on any sub-interval of ${(}y_{i-1},y_i{)}$. Similar to the proof of (iv), $K(s)=0$ has at most one root, denoted as  $s_r$ if it exists. If $s_r\in{(}y_{i-1},y_i{)}$, we have $K(s)>0$ on ${(}y_{i-1},s_r)$ and $K(s)<0$ on $(s_r,y_i{)}$. If $s_r\notin {(}y_{i-1},y_i{)}$ or no root  exists, then $L(s;I^*,\lambda)\lessgtr 0$ on ${(}y_{i-1},y_i{)}$. Thus, $L(s;I^*,\lambda)$ can cross the $s$-axis at most twice. Let
\begin{align*}
	s_{i,0}:=\inf\{s\in[y_{i-1},y_i]|L(s;I^*,\lambda)\geqslant 0\},\ s_{i,1}:=\inf\{s\in(s_{i,0},y_i{)}|L(s;I^*,\lambda)\leqslant 0\}.
\end{align*}
Then on ${(}y_{i-1},s_{i,0})$ and $(s_{i,1},y_i{)}$, $L(s;I^*,\lambda)<0$ and on $(s_{i,0},s_{i,1})$, $L(s;I^*,\lambda)>0$. Therefore, we obtain 
\begin{align*}
	\frac{\partial I^*}{\partial y}(t,y)=\chi_{{(}y_{i-1},s_{i,0})}(y)+\chi_{(s_{i,1},y_i{)}}(y),
\end{align*}
which completes the proof.
\end{proof}

{\begin{proof}[Proof of Proposition \ref{prop:3}] Since $\mathrm{LR}(x)$ is decreasing over $[0,\infty)$, then by Theorem \ref{form}, the optimal reinsurance strategy takes the form  $ I^*(t,y)=(y-d)_+$ for some $d\geq0.$ 
Substituting $ I^*(t,y)=(y-d)_+$  back into \eqref{inf}, we get \begin{align*} 	&c(I^*(t,\cdot))+\mathbb E[Y-I^*(t,Y)]+\dfrac{\gamma e^{r(T-t)}}{2}\mathbb E[(Y-I^*(t,Y))^2]\\ =&~(1+\theta)\int_{d}^{\infty}S^{\mathbb Q}(y)\d y+\int_0^{d}y\d F(y)+dS(d)+\dfrac{\gamma e^{r(T-t)}}{2}\left(\int_0^{d}y^2\d F(y)+d^2S(d)\right)\\
	=&: H_1(t,d). \end{align*} 
To minimize $H_1(t,d)$, we differentiate it with respect to $d$:
\begin{align*}
	\frac{\partial H_1}{\partial d}(t,d)=&S(d)-(1+\theta)S^{\mathbb Q}(d)+\gamma e^{r(T-t)}dS(d)\\
	=&S(d)\left(1+\gamma e^{r(T-t)}d-(1+\theta)\dfrac{S^{\mathbb Q}(d)}{S(d)}\right).
\end{align*}
Note that $\frac{\partial H_1}{\partial d}(t,0)=-\theta < 0$, which implies that $H_1$  is decreasing at zero.  Therefore,   the optimal threshold must satisfy  $d>0$.  
Moreover, since $H_1$ depends on $t$, so does the minimizer. We denote the optimal threshold by $d_t$ to emphasize its dependence on $t$.
\end{proof}}

\begin{proof}[Proof of Theorem \ref{thm3}]
	First, we show  the uniqueness of the equilibrium strategy. Suppose there exist two distinct  equilibrium strategy, $\bm I^1$ and $\bm I^2$.  Then   for any fixed $t\geq0$,   we have $I^1(t,y_0)-I^2(t,y_0)\neq 0$ for some $y_0 \in (0,\infty)$. Without loss of generality,  assume that $I^1(t,y_0)-I^2(t,y_0)>0$. Thus there must exist $\varepsilon >0$ such that $I^1(t,y)-I^2(t,y)>0$ for all $y\in [y_0-\varepsilon,y_0+\varepsilon]$. From Proposition \ref{sol_exist}, we know that $\mathbb P(I^1(t,Y)=I^2(t,Y))=1$,  which implies that  $\mathbb P(Y\in [y_0-\varepsilon,y_0+\varepsilon])=0$.  This  contradicts the assumption that  $F(y)$ is {strictly} increasing. Hence, the equilibrium strategy $I^*(t,y)$ is unique.
    
    Next, we prove that $I^*(t,y)$ is continuous in $t$. Unlike in Theorem \ref{form}, where the sub-intervals are fixed,  the form of the  equilibrium strategy may change as $t$ varies.  Consequently, the endpoints of these sub-intervals should be treated as parameters of the equilibrium strategy as well.   Note that some of these points are fixed  regardless of $t$. These include  $0$, the endpoints of the first type sub-intervals  where $\mathrm{LR}'(y)<0$, and the non-differentiable points of $\mathrm{LR}(y)$ and $F(y)$. Let these fixed points be denoted by  $0=x_0<x_1<x_2<...<x_p$. 
    
    We now introduce additional points to account for the maximum number of sub-intervals that may 
    arises as $t$ varies from $0$ to $T$. Specifically, on each $[x_{i-1},x_i]$ or $[x_p,\infty)$, $i=1,2,...,p$, we insert points such that the number of sub-intervals remains consistent with the partition structure outlined in Theorem \ref{form}.   For instance, if  $[x_{i-1},x_i]$ contains at most three sub-intervals, we add two points, $z_1(t)$ and $z_2(t)$, with $x_{i-1}\leqslant z_1(t)\leqslant z_2(t)\leqslant x_i$, ensuring the partition structure remains consistent. 
 This process generates a sequence of points
$0=y_0(t)\leqslant y_1(t)\leqslant y_2(t)\leqslant ...\leqslant y_q(t)$, which covers all divisions of  $[0,\infty)$ for any $t\in[0,T]$, as required  in Theorem \ref{form}.

Due to the piece-wise continuity of $\mathrm{LR}'(y)$, the sequence $y_i(t)$ is continuous in  $t$ for $i=0,1,2,...,q$. Let $I(\bm{x},t,y)$ denote  the parameterized representation of  the strategy $I(t,y)$, where  $\bm{x} \in (\mathbb R \cup \{\infty\})^n$ represents all the parameters. From Theorem \ref{form},  we know  that $I(\bm{x},t,y)$ is continuous in  $\bm{x}$. For each $t$,  define the domain of $\bm{x}$ as $D_t$. The continuity of  $y_i(t)$ implies that the Hausdorff distance (see, e.g., \cite{M00}) between  $D_t$ and $D_{t_0}$ tends to zero as  $t\rightarrow t_0$, i.e., 
$$d_H(D_t,D_{t_0})\rightarrow 0, ~ \text{as} ~~t\rightarrow t_0,$$ where  $d_H$  is   defined as
	\begin{align*}
		d_H(A,B):=\max\left\{\sup\limits_{a\in A} d(a,B), \sup\limits_{b\in B} d(A,b)\right\},
	\end{align*}
	with $d$ being the  Euclidean distance from a point to a subset  of $\R^n$.

    Substituting $I(\bm{x},t,y)$ into \eqref{eq:H},  we define the functional
	\begin{align*}
		H(\bm{x},t)=(1+\theta)\mathbb E^{\mathbb Q}[I(\bm{x},t,Y)]+\mathbb E[Y-I(\bm{x},t,Y)]+\dfrac{\gamma e^{r(T-t)}}{2}\mathbb E[(Y-I(\bm{x},t,Y))^2].
	\end{align*}
	Due to the uniqueness of the equilibrium strategy, the optimal problem $$\min\limits_{\bm{x}\in D_t}H(\bm{x},t)$$ has an unique minimizer $\bm{x}(t)$ for any $t\in [0,T]$. For any $t_0\in[0,T]$ and any $\bm{x}\in D_{t_0}$, take  sequences $\{t_n\}_{n=0}^{\infty}$ and $\{\bm{x}_n\}_{n=0}^{\infty}$ such that $t_n\rightarrow t_0$, $\bm{x}_n\in D_{t_n}$ and $\bm{x}_n\rightarrow \bm{x}$.  For each $n$, we have 
	$$ 
		H(\bm{x}(t_n),t_n)\leqslant H(\widetilde{\bm{x}},t_n),\ \forall \widetilde{\bm{x}}\in D_{t_n}.
$$
	We then take any subsequence $\{\bm{x}(t_{n_j})\}$ from $\{\bm{x}(t_n)\}$ such that $\bm{x}(t_{n_j})\rightarrow \overline{\bm{x}}\in D_{t_0}$ as $j\rightarrow \infty$. For each $j$, we also have
$$ 
		H(\bm{x}(t_{n_j}),t_{n_j})\leqslant H(\bm{x}_{n_j},t_{n_j}).
$$ 
	Let $j\rightarrow \infty$, we have 
$$
		H(\overline{\bm{x}},t_0)\leqslant H(\bm{x},t_0),
$$ 
	and thus $\overline{\bm{x}}$ is a minimizer of $\min\limits_{\bm{x}\in D_{t_0}}H(\bm{x},t_0)$ by the arbitrariness of $\bm{x}$. We conclude that  $\overline{\bm{x}}=\bm{x}(t_0)$ by the uniqueness of the minimizer and thus we have $$\lim\limits_{j\rightarrow\infty}\bm{x}(t_{n_j})=\bm{x}(t_0),$$ which proves that the equilibrium strategy is continuous in $t$.
\end{proof}

\begin{proof}[Proof of Corollary \ref{coro:1}]
Substituting $\overline I^*(t,y)=(x-d)_+$  back into \eqref{inf}, we get \begin{align*} 	&c(\overline I^*(t,\cdot))+\mathbb E[Y-\overline I^*(t,Y)]+\dfrac{\gamma e^{r(T-t)}}{2}\mathbb E[(Y-\overline I^*(t,Y))^2]\\ =&(1+\theta)\mathbb E[(Y-d)_+]+\mathbb E[Y\wedge d]+\dfrac{\gamma e^{r(T-t)}}{2}[(Y\wedge d)^2]\\ =&(1+\theta)\int_d^{\infty}S(y)\d y+\int_0^dy\d F(y)+dS(d)+\dfrac{\gamma e^{r(T-t)}}{2}\left(\int_0^dy^2\d F(y)+d^2S(d)\right)\\	=&:H(t,d). \end{align*} 
Using the first order condition, we have 
 $\frac{\partial H}{\partial d}(t,d)=\gamma e^{r(T-t)} d S(d)-\theta S(d).$  It is evident that  $\frac{\partial H}{\partial d}(t,0)=-\theta S(0)<0$ and $\lim\limits_{d\to\infty}\frac{\partial H}{\partial d}(t,d)=\infty$ since the term $\gamma e^{r(T-t)} d-\theta$ is increasing in $d$. Moreover, since $H$ depends on $t$, so does the minimizer. We denote the optimal threshold by $d_t$ to emphasize its dependence on $t$. Thus, the unique solution   is given by $\overline d^*_t={\theta} /{(\gamma e^{r(T-t)})}$. 
\end{proof}

\begin{proof}[Proof of Proposition \ref{cov-dis}] Since $g$ is convex and differentiable,  
we have that  LR$'(y)$ is decreasing on $[0,\infty)$.
 Then by Theorem \ref{form},  the equilibrium strategy has the form
$ 
	I^*(t,y)=(x-d)_+,
$ 
for some $d \geqslant 0$. 
Further, by following a similar proof to that of Proposition \ref{prob:2}, we have
\begin{align*}
	&c(I^*(t,\cdot))+\mathbb E[Y-I^*(t,Y)]+\dfrac{\gamma e^{r(T-t)}}{2}\mathbb E[(Y-I^*(t,Y))^2]\\
	=&~(1+\theta)\int_d^{\infty}S^{\mathbb Q}(y)\d y+\int_0^dy\d F(y)+dS(d)+\dfrac{\gamma e^{r(T-t)}}{2}\left(\int_0^dy^2\d F(y)+d^2S(d)\right)\\
	=&~ H_1(t,d).
\end{align*}
Next, we find the minimizer of $H_1(t,d)$. First, we compute the derivative:
\begin{align*}
	\frac{\partial H_1}{\partial d}(t,d)=&S(d)-(1+\theta)S^{\mathbb Q}(d)+\gamma e^{r(T-t)}dS(d)\\
	=&S(d)\left(1+\gamma e^{r(T-t)}d-(1+\theta)\dfrac{g(S(d))}{S(d)}\right).
\end{align*}
Let $$K(t,d)=1+\gamma e^{r(T-t)}d-(1+\theta)\dfrac{g(S(d))}{S(d)}.$$ Because  $g$ is convex, it follows that  ${g(S(d))}/{S(d)}$ is decreasing in $d$, and therefore   $K(t,d)$ is increasing in $d$. 
  Given that  $K(t,0)=-\theta<0$ and $\lim\limits_{d\to\infty} K(t,d)=\infty$, there exists a $d^*_t>0$ such that $K(t,d^*_t)=0$. Therefore, $d^*_t$ is the minimizer of $H_1(t,d)$.
\end{proof}

\begin{proof}[Proof of Proposition \ref{prop:4}]
Since Assumption \ref{ass1} does not hold, we apply the Lebesgue decomposition theorem to decompose $\mathbb Q$ into two components: $\mathbb Q^{ac}$ and $\mathbb Q^{\perp}$.  We have $\mathbb Q^{ac} \ll \mathbb P$ and $\mathbb Q^{\perp} \perp \mathbb P$.\footnote{Note that  $\mathbb{Q}^{\perp} \perp \mathbb{P}$ means that the measure $\mathbb{Q}^{\perp}$ is singular with respect to $\mathbb{P}$. This implies there exists a measurable set $B$ such that $\mathbb{Q}^{\perp}(B) = 1$ and $\mathbb{P}(B) = 0$.} Specifically, $\mathbb Q^{ac}(Y\in E) = \mathbb Q(Y\in E \cap \{\mathrm{LR} < \infty\})$ and $\mathbb Q^{\perp}(Y\in E) = \mathbb Q(Y\in E \cap \{\mathrm{LR} = \infty\}) = \mathbb Q(Y\in E \cap \{\VaR_{\alpha}(Y)\})$ for any Borel set $E$. Therefore, we can rewrite the expectation under $\mathbb Q$ in \eqref{ldp} as $\mathbb E^{\mathbb Q} = \mathbb E^{\mathbb Q^{ac}} + \mathbb E^{\mathbb Q^{\perp}}$. Upon fixing the value of $I$ on $\{\mathrm{LR} = \infty\}$, i.e., $I(\VaR_{\alpha}(Y))$, the remaining part of the problem \eqref{ldp} falls within the scope of Theorem \ref{form}.
By Theorem \ref{form}, we can divide $[0,\infty)$ into $[0,\VaR_{\alpha}(Y))$ and $[\VaR_{\alpha}(Y),\infty)$. Then we have for $y\in [\VaR_{\alpha},\infty)$,
\begin{align}\label{v1}
I^*(t,y)=\min\left\{\max\left\{y+\frac{\lambda}{\gamma e^{r(T-t)}},I^*(t,\VaR_{\alpha}(Y))\right\},I^*(t,\VaR_{\alpha}(Y))+y-\VaR_{\alpha}(Y) \right\},	
\end{align}
and for $y\in[0,\VaR_{\alpha}(Y))$,
\begin{align}\label{v2}
	I^*(t,y)=\min\left\{\max\left\{y+\frac{\lambda}{\gamma e^{r(T-t)}},I^*(t,\VaR_{\alpha}(Y))+y-\VaR_{\alpha}(Y),0\right\},y,I^*(t,\VaR_{\alpha}(Y))\right\}.
\end{align} 
Thus $I^*(t,\cdot)$ is determined by two parameters $a=I^*(t,\VaR_{\alpha}(Y))\in[0,\VaR_{\alpha}(Y)]$ and $\lambda\in\mathbb R$. The relationship between $I^*$, $a$, and $\lambda$ is shown in Figure \ref{fig:var}.
\begin{figure}[!htbp]
\centering
\includegraphics[scale=0.48]{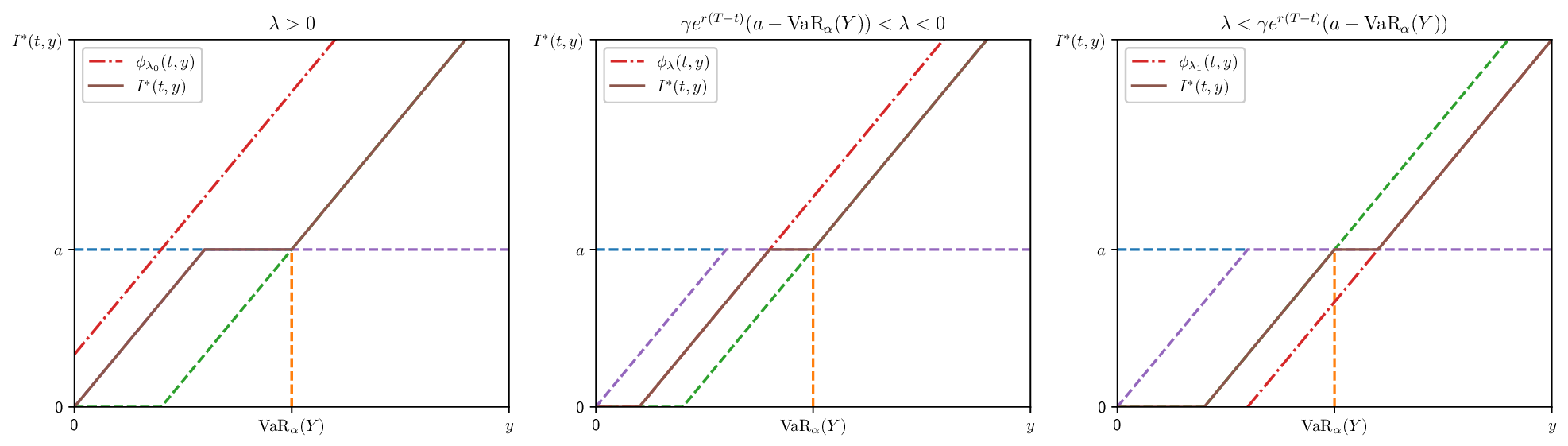}
\caption{The relationship between $I^*$ and $a$, $\lambda$}
\label{fig:var}
\end{figure}

Since
\begin{align*}
	\mathbb E^{\mathbb Q}[I^*(t,Y)]=\int_0^{\infty}I^*(t,y)\delta(y-\VaR_{\alpha}(Y))\d y=I^*(t,\VaR_{\alpha}(Y)),
\end{align*}
 it follows that
\begin{align*}
	&c(I^*(t,\cdot))+\mathbb E[Y-I^*(t,Y)]+\dfrac{\gamma e^{r(T-t)}}{2}\mathbb E[(Y-I^*(t,Y))^2]\\
	=&(1+\theta)a+\mathbb E[Y-I^*(t,Y)]+\dfrac{\gamma e^{r(T-t)}}{2}\mathbb E[(Y-I^*(t,Y))^2].
\end{align*}
To minimize \eqref{inf}, $I^*$ should be as large as possible. Thus, we only need to consider $\lambda\geqslant 0$ because $I^*$ is increasing in $\lambda$. Furthermore, $I^*$ takes the same value for all $\lambda\geqslant 0$. Thus  the equilibrium strategy can be simplified into a single parameter form
\begin{align}\label{v3}
I^*(t,y)=y\wedge a+(y-\VaR_\alpha(Y))_+.	
\end{align}
In fact, the reinsurance premium is  determined  solely by $I^*(t,\VaR_{\alpha}(Y))$, and thus with the given $I^*(t,\VaR_{\alpha}(Y))$, the insurer naturally seeks to increase $I^*(t,y)$  on  both $[0,\VaR_{\alpha}(Y))$ and $(\VaR_{\alpha}(Y),\infty)$. This is precisely the form given by \eqref{v3}. Then  it follows that 
\begin{align*}
	&c(I^*(t,\cdot))+\mathbb E[Y-I^*(t,Y)]+\dfrac{\gamma e^{r(T-t)}}{2}\mathbb E[(Y-I^*(t,Y))^2]\\
	=&~(1+\theta)a+\int_a^{\VaR_{\alpha}(Y)}(y-a)\d F(y)+\int_{\VaR_{\alpha}(Y)}^{\infty}(\VaR_{\alpha}(Y)-a)\d F(y)\\
	&+\dfrac{\gamma e^{r(t-t)}}{2}\left(\int_0^{\VaR_{\alpha}(Y)}(y-a)^2\d F(y)+\int_{\VaR_{\alpha}(Y)}^{\infty}(\VaR_{\alpha}(Y)-a)^2\d F(y)\right)\\
	=&:H_2(t,a).
\end{align*}
To find the minimizer of \eqref{inf},
we have  \begin{align*}
	\frac{\partial H_2}{\partial a}(t,a)=\theta+F(a)+\gamma e^{r(T-t)}\left(a-\int_a^{\VaR_{\alpha}(Y)}y\d F(y)-aF(a)-\alpha\VaR_{\alpha}(Y)\right).
\end{align*}
Note that $$\frac{\partial H_2}{\partial a}(t,\VaR_{\alpha}(Y))=\theta+1-\alpha>0,$$ and $$\frac{\partial H_2}{\partial a}(t,0)=\theta-\gamma e^{r(t-t)}(\mathbb E[Y]+\alpha\VaR_{\alpha}(Y)-\alpha\ES_\alpha(Y)).$$ Moreover,  we have $$\frac{\partial^2 H_2}{\partial a^2}(t,a)=f(a)+\gamma e^{r(T-t)}(1-F(a))>0,$$ which implies that  $\frac{\partial H_2}{\partial a}(t,\cdot)$ is {strictly} increasing on $[0,\VaR_{\alpha}(Y)]$. 

When $\theta<\gamma e^{r(T-t)}(\mathbb E[Y]+\alpha\VaR_{\alpha}(Y)-\alpha\ES_{\alpha}(Y))$, we have $\frac{\partial H_2}{\partial a}(t,0)<0$, and there exists a unique $a^*_t\in[0,\VaR_{\alpha}(Y)]$ such that $\frac{\partial H_2}{\partial a}(t,a^*_t)=0$.  On the other hand,  when ${\theta}\geqslant {\gamma e^{r(T-t)}} (\mathbb E[Y]+\alpha\VaR_{\alpha}(Y)-\alpha\ES_{\alpha}(Y))$, we have $\frac{\partial H_2}{\partial a}(t,a)\geqslant 0$ for all $a\in[0,\VaR_{\alpha}(Y)]$, and thus $a^*_t=0$ is the minimizer of \eqref{inf}. We complete the proof. 
\end{proof}

\begin{proof}[Proof of Proposition \ref{prop:5}]
By Theorem \ref{form}, we can divide $[0,\infty)$ into $[0,\VaR_{\alpha}(Y))$ and $[\VaR_{\alpha}(Y),\infty)$. We have
\begin{align*}
	I^*(t,y)=\min\left\{\max\left\{y+\frac{\lambda\alpha-(1+\theta)}{\alpha\gamma e^{r(T-t)}},I^*(t,\VaR_{\alpha}(Y))\right\},I^*(t,\VaR_{\alpha}(Y))+y-\VaR_{\alpha}(Y) \right\} 	
\end{align*} for $y\in[\VaR_{\alpha},\infty)$, and 
\begin{align*}
	I^*(t,y)=\min\{\max\{y+\frac{\lambda}{\gamma e^{r(T-t)}},I^*(t,\VaR_{\alpha}(Y))+y-\VaR_{\alpha}(Y),0\},y,I^*(t,\VaR_{\alpha}(Y))\}
\end{align*} for $y\in[0,\VaR_{\alpha}(Y))$. 

To determine the optimal indemnity function $I^*(t,y)$, only two parameters are needed:  $a=I^*(t,\VaR_{\alpha}(Y))\in [0,\VaR_{\alpha}(Y)]$ and $\lambda\in \mathbb R$. According to the values of $a$ and $\lambda$, $I^*(t,y)$ can take up to eight specific forms, which can be classified into two cases depending on the value of  $a$:
$$(1) \frac{1+\theta}{\alpha\gamma e^{r(T-t)}}+a-\VaR_{\alpha}(Y)\leqslant 0~~\text{and}~~  (2)\frac{1+\theta}{\alpha\gamma e^{r(T-t)}}+a-\VaR_{\alpha}(Y) > 0.$$ For the first case, where  $\frac{1+\theta}{\alpha\gamma e^{r(T-t)}}+a-\VaR_{\alpha}(Y)\leqslant 0$, the relationship between $I^*$ and  the parameters $a$, $\lambda$ is shown in Figure \ref{fig:es_1}. 
\begin{figure}[!htbp]
\centering
\includegraphics[scale=0.55]{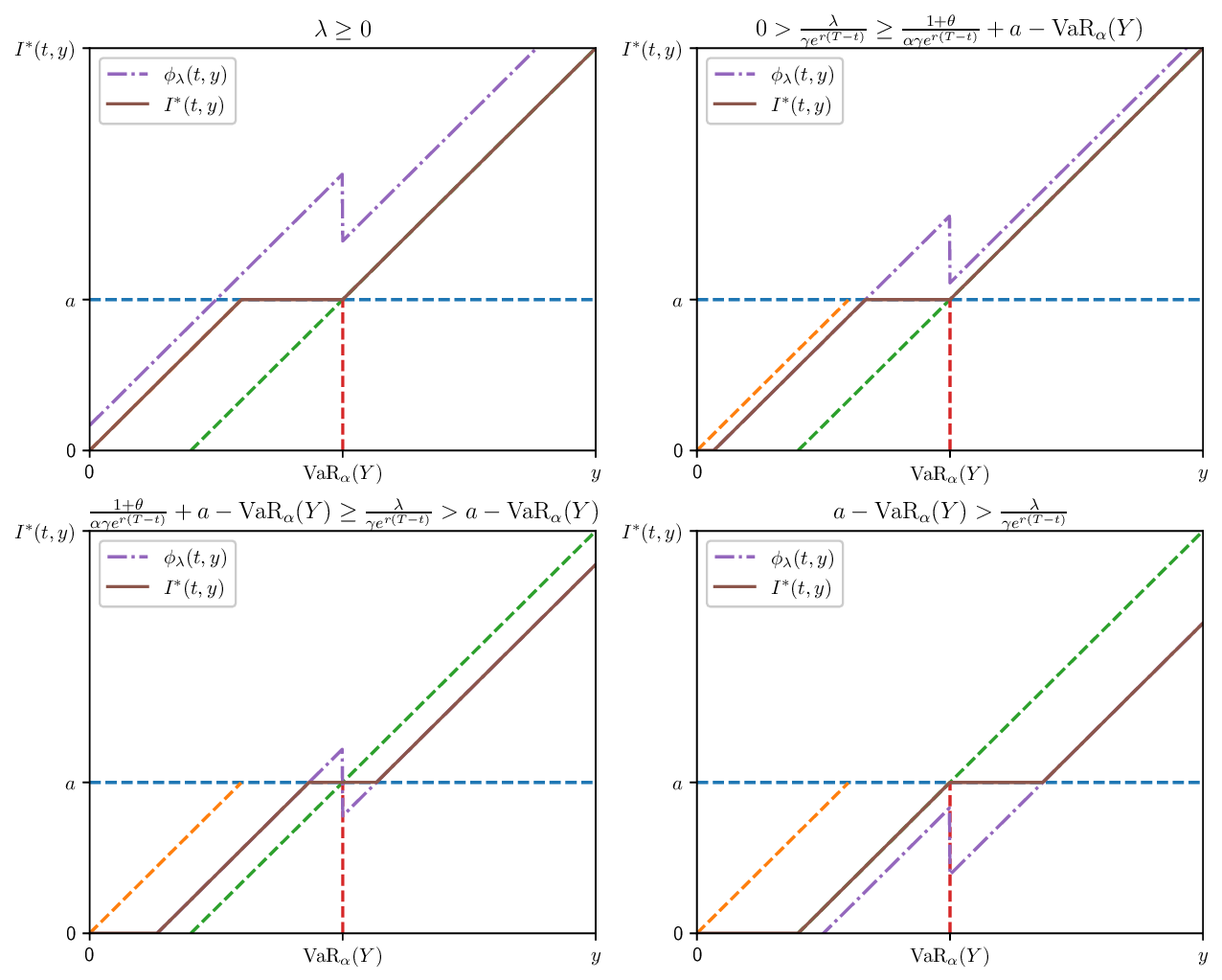}
\caption{The relationship between $I^*$ and $a$, $\lambda$ when $\frac{1+\theta}{\alpha\gamma e^{r(T-t)}}+a-\VaR_{\alpha}(Y)\leqslant 0$}
\label{fig:es_1}
\end{figure}
Next, we  show that if $I^*(t,y)$  is an equilibrium strategy in this case, then the corresponding value of $\lambda$ must be non-negative. In fact, we have 
\begin{align*}
	\mathbb E^{\mathbb Q}[I(t,Y)]&=\int_0^{\infty}I(t,y)\d F^{\mathbb Q}(y)=\dfrac{1}{\alpha}\int_{\VaR_{\alpha}(Y)}^{\infty}I(t,y)\d F(y)\\
	&=\dfrac{1}{\alpha}\int_0^{\alpha}I(t,\VaR_s(Y))\d s=\dfrac{1}{\alpha}\int_0^{\alpha}\VaR_s(I(t,Y))\d s\\
	&=\mathrm{ES}_{\alpha}(I(t,Y)).
\end{align*}
Thus, $\mathbb E^{\mathbb Q}(I(t,Y))$ only depends on the values of $I(t,y)$ for $y\in [\VaR_{\alpha}(Y),\infty)$. If $\lambda <0$, we define a new indemnity function
\begin{align*}
	\widetilde I(t,y)=\left\{
	\begin{aligned}
		&y\wedge a, &y\in &[0,\VaR_{\alpha}(Y)),\\
		&I^*(t,y), &y\in &[\VaR_{\alpha}(Y),\infty).
	\end{aligned}
	\right.
\end{align*}
Clearly,  $\widetilde I(t,y)>I^*(t,y)$ for $y\in[0,\VaR_{\alpha}(Y))$. Further,  we have
\begin{align*}
	&c(\widetilde I(t,\cdot))+\mathbb E[Y-\widetilde I(t,Y)]+\dfrac{\gamma e^{r(T-t)}}{2}\mathbb E[(Y-\widetilde I(t,Y))^2]\\
	=&c(I^*(t,\cdot))+\mathbb E[Y-\widetilde I(t,Y)]+\dfrac{\gamma e^{r(T-t)}}{2}\mathbb E[(Y-\widetilde I(t,Y))^2]\\
	<&c(I^*(t,\cdot))+\mathbb E[Y-I^*(t,Y)]+\dfrac{\gamma e^{r(T-t)}}{2}\mathbb E[(Y-I^*(t,Y))^2],
\end{align*}
which contradicts the assumption that $I^*$ is an equilibrium strategy. Therefore, in this case, the equilibrium indemnity function must have the following form
\begin{align}\label{es1}
	I^*(t,y)=y\wedge a+(y-\VaR_{\alpha}(Y))_+.
\end{align}
For the second case, where $\frac{1+\theta}{\alpha\gamma e^{r(T-t)}}+a-\VaR_{\alpha}(Y) > 0$, the relationship between $I^*$ and the parameters $a$, $\lambda$ is shown in Figure \ref{fig:es_2}.
\begin{figure}[!htbp]
\centering
\includegraphics[scale=0.55]{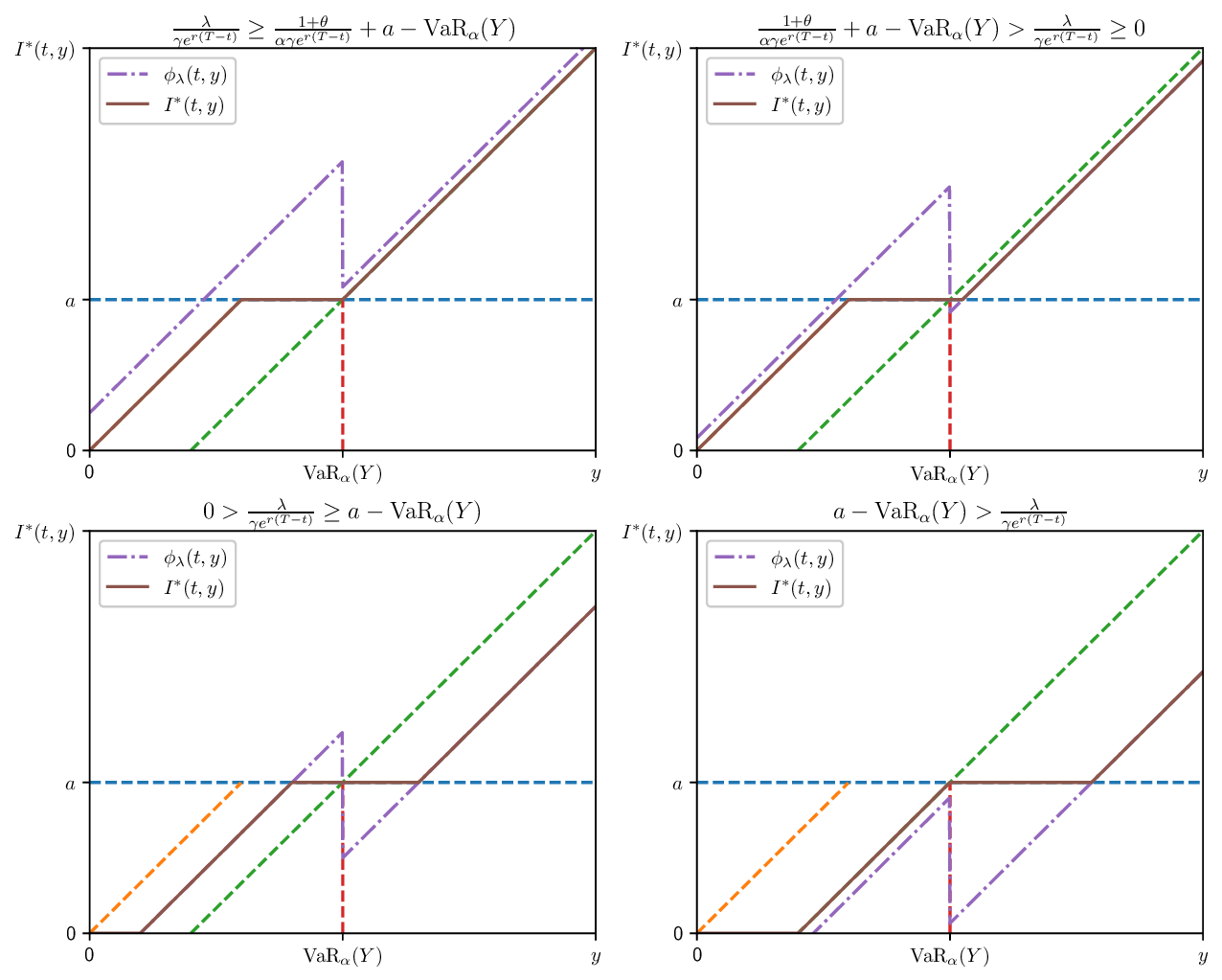}
\caption{The relationship between $I^*$ and $a$, $\lambda$ when $\frac{1+\theta}{\alpha\gamma e^{r(T-t)}}+a-\VaR_{\alpha}(Y)> 0$}
\label{fig:es_2}
\end{figure}
As in the first case, we can conclude that if $I^*(t,y)$  is an equilibrium strategy, then the corresponding $\lambda$ must be non-negative. Further, when $\lambda\geqslant \frac{1+\theta}{\alpha}+\gamma e^{r(T-t)}(a-\VaR_{\alpha}(Y))$, the optimal indemnity function $I^*$ takes the same form as in the first case. Therefore, we only need to consider $0\leqslant\lambda\leqslant \frac{1+\theta}{\alpha}+\gamma e^{r(T-t)}(a-\VaR_{\alpha}(Y))$, and the equilibrium indemnity function takes the following form
\begin{align}\label{es2}
I^*(t,y)=y\wedge a+(y-b)_+,\ \ b \in\left[\VaR_{\alpha}(Y),	a+\dfrac{1+\theta}{\alpha\gamma e^{r(T-t)}}\right].
\end{align}
Combining \eqref{es1} and \eqref{es2},  we get the desired result. 
\end{proof}

\begin{proof}[The proof of Proposition \ref{Prop:6}]
(i) When $\theta_1\geq\theta_2$, we have  LR$'(y)<0$. By Corollary \ref{coro:1},  the equilibrium strategy is  the form of
$ I^*(t,y)=(y-d)_+$
for some $d\geqslant 0$.  To determine $d$, substituting $I^*$ back into \eqref{inf} yields \begin{align*} &	c(I^*(t,\cdot))+\mathbb E[Y-I^*(t,Y)]+\dfrac{\gamma e^{r(T-t)}}{2}\mathbb E[(Y-I^*(t,Y))^2]\\ =&~(1+\theta)\int_d^{\infty}S^{\mathbb Q}(y)\d y+\int_0^dy\d F(y)+dS(d)+\dfrac{\gamma e^{r(T-t)}}{2}\left(\int_0^dy^2\d F(y)+d^2S(d)\right)\\	=&:H_3(t,d). \end{align*} 
Thus, by \eqref{LR:exp}, we have 
\begin{align*}
	\frac{\partial H_3}{\partial d}(t,d)=e^{-\frac{d}{\theta_1}}\left(1+\gamma e^{r(T-t)}d-(1+\theta)e^{-(\frac{1}{\theta_2}-\frac{1}{\theta_1})d}\right).
\end{align*}
By the implicit function theorem, there exists a unique continuous $d=d_t$ such that $\frac{\partial H_3}{\partial d}(t,d_t)=0$. Additionally, we observe that  $\frac{\partial H_3}{\partial d}(t,d)<0$ on $[0,d_t]$ and $\frac{\partial H_3}{\partial d}(t,d)>0$ on $[d_t,\infty)$. Thus, the optimal $d^*_t$ is given by \eqref{eq:d_ex1}.

(ii) When $\theta_1\leq\theta_2-\gamma e^{r(T-t)}\frac{\theta_2^2} {1+\theta}$,  we have  $\gamma e^{r(T-t)}-(1+\theta)\text{LR}'(y)\leqslant 0$ on $[0,\infty)$. By Theorem \ref{form},  the equilibrium strategy is $I^*(t,y)=x\wedge d$,
for some $d\geqslant0$. Substituting $I^*$ into \eqref{inf}, we have
\begin{align*}
	&c(I^*(t,\cdot))+\mathbb E[Y-I^*(t,Y)]+\dfrac{\gamma e^{r(T-t)}}{2}\mathbb E[(Y-I^*(t,Y))^2]\\
	=&(1+\theta)\mathbb E^{\mathbb Q}[Y\wedge d]+\mathbb E[(Y-d)_+]+\dfrac{\gamma e^{r(T-t)}}{2}[(Y-d)_+^2]\\
	=&(1+\theta)\left(\int_0^dy\d F^{\mathbb Q}(y)+dS^{\mathbb Q}(d)\right)+\int_d^{\infty}(y-d)\d F(y)+\dfrac{\gamma e^{r(T-t)}}{2}\int_d^{\infty}(y-d)^2\d F(y)\\
	=&(1+\theta)\left(\int_0^dy\d F^{\mathbb Q}(y)+dS^{\mathbb Q}(d)\right)+\int_d^{\infty}S(y)\d y+\gamma e^{r(T-t)}\int_d^{\infty}(y-d)S(y)\d y\\
	=&:H_4(t,d).
\end{align*}
Taking the derivative of $H_4(t,d)$, we get 
\begin{align*}
	\frac{\partial H_4}{\partial d}(t,d)=&~(1+\theta)S^{\mathbb Q}(d)-S(d)-\gamma e^{r(T-t)}\int_d^{\infty}S(y)\d y\\
	=&~(1+\theta)e^{-\frac{d}{\theta_2}}-e^{-\frac{d}{\theta_1}}-\gamma e^{r(T-t)}\int_d^{\infty}e^{-\frac{d}{\theta_1}}\d y\\
	=&~(1+\theta)e^{-\frac{d}{\theta_2}}-(1+\gamma e^{r(T-t)}\theta_1)e^{-\frac{d}{\theta_1}}.
\end{align*}
If $\theta\geq \gamma e^{r(T-t)}\theta_1$, then $\frac{\partial H_4}{\partial d}(t,d)\geqslant 0$  for all $d\geqslant 0$ and thus $d^*_t=0$ is the minimizer of $H_4(t,d)$. Otherwise, if $\theta<\gamma e^{r(T-t)}\theta_1< 1$, there exists $d^*_t>0$ such that $$\frac{1+\theta}{1+\gamma e^{r(T-t)}\theta_1}e^{(\frac{1}{\theta_1}-\frac{1}{\theta_2})d^*_t}=1.$$ For $d\in [0,d^*_t)$, we have  $\frac{\partial H_4}{\partial d}(t,d)<0$, and  for  $d\in[d^*_t,\infty)$, we have   $\frac{\partial H_4}{\partial d}(t,d)>0$. Thus, $$d^*_t=\frac{\theta_1\theta_2}{\theta_2-\theta_1}\ln\frac{1+\gamma e^{r(T-t)}\theta_1}{1+\theta}$$ is the minimizer of $H_4(t,d)$.

 (iii) When $\theta_2-\gamma e^{r(T-t)}\frac{\theta_2^2} {1+\theta} < \theta_1<\theta_2$, then there exists a $y_1>0$ such that $$\gamma e^{r(T-t)}-(1+\theta)\text{LR}'(y_1)=0.$$ By direct calculation, we know that $$y_1=\frac{\theta_1\theta_2}{\theta_2-\theta_1}r(T-t)+\frac{\theta_1\theta_2}{\theta_2-\theta_1}\ln\frac{\gamma\theta_2^2}{(1+\theta)(\theta_2-\theta_1)}.$$ On $(y_1,\infty)$, $\gamma e^{r(T-t)}-(1+\theta)\text{LR}'(y)<0$. By Theorem \ref{form},  the form of equilibrium strategy is 
\begin{align}\label{e1}
	I^*(t,y)=I^*(t,y_1)+y\wedge d-y_1,
\end{align}
for some $d\geqslant y_1$. On $[0,y_1]$, $\gamma e^{r(T-t)}-(1+\theta)\text{LR}'(y)\geqslant 0$ and by Theorem \ref{form}, we have
\begin{align}\label{e2}
	I^*(t,y)=\min\{\max\{\phi_{\lambda}(t,y),I^*(t,y_1)+y-y_1,0\},y,I^*(t,y_1)\}.
\end{align}
By combining \eqref{e1} and \eqref{e2}, we can express the equilibrium strategy $I^*(t,y)$ on $[0,\infty)$ as follows: 
$$ 
I^*(t,y)=\min\{\max\{\phi_{\lambda}\chi_{\{y\leqslant y_1\}},a+y-y_1,0\},y,a\}+(y-y_1)_+-(y-d)_+,	
$$ 
where $a\in[0,y_1]$, $d\in[y_1,\infty)$, and $\lambda\in\mathbb R$. Here,  $\phi_{\lambda}(t,y)$ is given by $$\phi_{\lambda}(t,y)=y-\frac{(1+\theta)\theta_1}{\theta_2\gamma e^{r(T-t)}}e^{(\frac{1}{\theta_1}-\frac{1}{\theta_2})y}+\frac{\lambda}{\gamma e^{r(T-t)}}.$$  The equilibrium strategy  is determined by finding the values of $(a,d,\lambda)$ that minimize the functional $H(t,I^*)$ in \eqref{inf}  which is feasible because it is continuous  with respect to  $(a,d,\lambda)$.
\end{proof}

\end{document}